\newcommand{\pref}[1]{{\rm (\ref{#1})}}
\def\area{\mathrm{area}}
\def\dinv{\mathrm{dinv}}
\def\bounce{\mathrm{b}}
\def\PolyominoParrallelogram{\mathbb{P}}
\def\Paths{\mathrm{P}}
\def\InvariantsSL{\mathcal{R}}
\def\Labelledpolyominoes{\mathbb{L}}
\def\LabelledPaths{\mathrm{L}}
\def\N{\mathbb{N}}
\def\S{\mathbb{S}}
\def\C{\mathbb{C}}
\def\Gr{\mathbb{G}}
\def\Plucker{\mathcal{P}}
\def\SL#1{{\rm SL}_#1}
\def\Frob{\mathrm{Frob}}
\def\revddots{\mathinner{\mkern1mu\raise\p@
\vbox{\kern7\p@\hbox{.}}\mkern2mu
\raise4\p@\hbox{.}\mkern2mu\raise7\p@\hbox{.}\mkern1mu}}
\def\bz{\overline{0}}
\def\bu{\overline{1}}
\def\bd{\overline{2}}
\def\bt{\overline{3}}
\def\AA{\mathbb{A}}
\def\mbf#1{{\mathbf #1}}
\newcommand{\qbinom}[2]{\genfrac{[}{]}{0pt}{}{\,#1\,}{#2}_q}
\newcommand{\qn}[1]{[\,#1\,]_q}
\def\auteur#1{{\sc #1}}
\def\titreref#1{{\em #1}}
\def\vol#1{{\bf #1}}
\def\defn#1{{\em #1}}
\def\jrectangle#1#2{\put(-0.1,0.5){\multiput(0,0)(0,1){#2}{\multiput(0,0)(1,0){#1}{\jaune{\linethickness{6mm}\line(1,0){1.2}}}}}}
\def\vrectangle#1#2{\put(-0.1,0.5){\multiput(0,0)(0,1){#2}{\multiput(0,0)(1,0){#1}{\vertpale{\linethickness{6mm}\line(1,0){1.2}}}}}}
\def\est{\line(1,0){1.05}}
\def\nord{\line(0,1){1.05}}
\def\grille#1#2{\linethickness{.2mm}
   \multiput(.5,1)(0,1){#2}{\gris{\line(1,0){#1}}}
   \multiput(1,.5)(1,0){#1}{\gris{\line(0,1){#2}}}}
\def\coord#1#2#3#4{\linethickness{.3mm}
    \put(0,#2){\vector(1,0){#3}}
    \put(#1,0){\vector(0,1){#4}}}
\def\palecarre{\jaune{\linethickness{\unitlength}\line(1,0){1.1}}}
\def\bleu{\textcolor{blue}}
\def\gris#1{{\color{Gray}#1}}
\def\rouge{\textcolor{red}}
\def\jaune{\textcolor{yellow}}
\def\vertpale#1{{\color{SpringGreen} #1}}
\def\bleux{}
\def\rougex{}
\newtheorem{proposition}{\bleu{Proposition}}
\theoremstyle{definition}
\newtheorem{remark}{\bleu{Remark}}
\begin{document} 

\title[Labelled polyominoes]{\bleu{Combinatorics of  Labelled Parallelogram polyominoes}}
\author[J.-C.~Aval]{J.-C.~Aval}
\address{Labri, CNRS, Universit\'e de Bordeaux,
251 Cours de la Lib√©ration, Bordeaux.}
\author[F.~Bergeron]{F. Bergeron}
\address{D\'epartement de Math\'ematiques, UQAM,  C.P. 8888, Succ. Centre-Ville, 
 Montr\'eal,  H3C 3P8, Canada.}
 \date{November 2012. This work was supported by NSERC-Canada, NSF-USA and CNRS-France}
 \author[A.~Garsia]{A.~Garsia}
\address{Department of Mathematics, UCSD.}
\maketitle
\begin{abstract}
We obtain explicit formulas for the enumeration of labelled parallelogram polyominoes. These are the polyominoes that are bounded, above and below,
by north-east lattice paths going from the origin to a point $(k,n)$. The numbers from $1$ and $n$ (the labels) are bijectively attached to the $n$ north steps of the above-bounding path,
with the condition that they appear in increasing values along consecutive north steps. We  calculate the Frobenius characteristic of the action of the symmetric group $\S_n$ on these labels.  All these enumeration results are  refined to take into account the area of these polyominoes. We make a connection between our enumeration results and the theory of operators for which the integral Macdonald polynomials are joint eigenfunctions. We also explain how these same polyominoes can be used to explicitly construct a linear basis of a ring of $SL_2$-invariants.
\end{abstract}
 \parskip=0pt

{ \setcounter{tocdepth}{1}\parskip=0pt\footnotesize \tableofcontents}
\parskip=8pt

\section{Introduction}
Parallelogram polyominoes have been studied by many authors (see~\cite{bousquet, gessel, viennot} for a nice survey and enumeration results).  
They correspond to pairs $\pi=(\alpha,\beta)$ of north-east paths going from the origin to a point $(k,n)$ in the combinatorial plane $\N\times \N$, with the path $\alpha$ staying ``above'' the path $\beta$. 
Our aim here is to study properties, and related nice formulas, of  ``labelled parallelogram polyominoes''. 
These are obtained by bijectively  labelling each of the $n$ north steps of the path $\alpha$ with the numbers between $1$ and $n$.   
Our motivation stems from a similarity between this new notion and recent work on labelled intervals in the Tamari lattice, in connection with the study of trivariate diagonal harmonic polynomials for the symmetric group (see~\cite{trivariate}). 

We  calculate explicitly the Frobenius characteristic of the natural action of the symmetric group $\S_n$ on these labelled polyominoes; and study aspects of a weighted version of this Frobenius characteristic with respect to the area of the polyominoes. This connects our study to interesting operators for which adequately normalized Macdonald polynomials are joint eigenfunctions. This is the same theory that appears in the study of the $\S_n$-module of bi-variate diagonal harmonics (see~\cite{HHLRU, HMZ, remarkable}).

We also extend some of our considerations to``doubly'' labelled parallelogram polyominoes, with added labels on east steps of the below-bounding path; with a corresponding action of the group $\S_k\times \S_n$. Several components of these spaces are naturally related to parking function modules.

\section{Paths and labelled paths}\label{paths}
A $k\times n$ north-east (lattice) path in $\N\times \N$ is a sequence $\alpha=(p_0,\ldots,p_i,\ldots p_N)$ points $p_i=(x_i,y_i)$  in $\N\times \N$, with $p_0=(0,0)$, such that
    $$\bleux{(x_{i+1},y_{i+1})}=\begin{cases}
      \bleux{(x_i,y_i)+\rougex{(1,0)}} & \text{an \defn{east step}, or } \\[4pt]
      \bleux{(x_i,y_i)+\rougex{(0,1)}} & \text{a \defn{north step}}.
\end{cases}$$
We denote by $\bleux{\Paths_{k,n}}$ the set of north-east paths from 
$(0,0)$ to $(k,n)$ ({\sl i.e.} $p_N=(k,n)$, and hence $N=k+n$). As is well known, these paths number $\binom{n+k}{k}$. They are often encoded as word $\omega=w_1w2\cdots w_N$, on the alphabet $\{x,y\}$, with $x$ standing for an east step, and $y$ for a north step; and $k$ is the number of $x$'s, while $n$ is that of $y$'s. Thus, the path of Figure~\ref{fig0} is encoded as $yyxxxyyxxyxxxyxx$.

Another description of such a path $\alpha$ may be given in terms of the sequence of ``{heights}'' of its $k$ horizontal steps (written from left to right), so that we may write
    $$\bleux{\alpha=a_1a_2\cdots a_k}.$$ 
More specifically, $a_i$ is equal to the unique $y_j$ such that $x_j=i$ and  $x_{j-1}=i-1$. We say that this the \defn{height sequence} description of $\alpha$.  Thus, the top path in Figure~\ref{fig0} may be encoded as the height sequence
$2224455566$. Perforce, the height sequence is increasing, {\sl i.e.} $a_i\leq a_{i+1}$; and any increasing sequence 
$a_1a_2\cdots a_k$, with $0\leq a_i\leq n$, corresponds to a unique path. 

Exchanging the role of the axes, we may also describe $\alpha$ in terms of its \defn{indentation sequence}. This is simply the bottom to top sequence of distance between vertical steps in $\alpha$, and the $y$-axis. For example, the indentation sequence of our running example is $003358$. Once again, this establishes a bijection between $k\times n$ north-east lattice paths, and length $n$ increasing sequence of values between $0$ and $k$.

There is a classical bijection between north-east paths in $\Paths_{k,n}$ and monomials of degree $k$ in the variables $\mbf{x}=x_1,\ldots,x_{n+1}$. One simply associates to a height sequence $\alpha=(a_1,a_2,\ldots,a_k)$, the monomial 
  $$\bleux{\mbf{x}_\alpha:=x_{a_1+1}x_{a_2+1}\cdots x_{a_k+1} }.$$
Thus, we get the well known formula for the \defn{Hilbert series}, denoted $R(x)$, of the polynomial ring $R=\C[\mbf{x}]$:
\begin{eqnarray}
   \bleu{R(x)} &=& \bleu{\sum_{k=0}^\infty \binom{n+k}{k} x^k}\\
      &=& \bleu{\left(\frac{1}{1-x}\right)^{n+1}}.
\end{eqnarray}
Recall that the coefficients of this series correspond to the dimension of the homogeneous components of the space considered.

The $q$-weighted \defn{area} enumeration of $\Paths_{k,n}$, is defined to be
\begin{equation}
   \bleux{\Paths_{k,n}(q):=\sum_{\alpha\in \Paths_{k,n}} q^{\area(\alpha)}},
\end{equation}
with $\area(\alpha)$ equal to the number of cells in $\N\times \N$ lying below the path $\alpha$. In Figure~\ref{fig0}, the area of the path is the shaded region.
It is also well known that  $\Paths_{k,n}(q)$ is equal to the classical $q$-analog of the binomial coefficient:
\begin{eqnarray}
   \bleu{\Paths_{k,n}(q)}&=&\bleu{\qbinom{n+k}{k}}\\
        &=&\bleu{h_{k}(1,q,\ldots,q^{n+1}}).
\end{eqnarray}

\begin{figure}[ht]
\setlength{\unitlength}{5mm}
\def\palecarre{\jaune{\linethickness{\unitlength}\line(1,0){1}}}
\begin{center}
\begin{picture}(11,8)(0,-1)
\put(0,.5){\multiput(0,0)(1,0){10}{\palecarre}
               \multiput(0,1)(1,0){10}{\palecarre}
              \multiput(3,2)(1,0){7}{\palecarre}
               \multiput(3,3)(1,0){7}{\palecarre}
               \multiput(5,4)(1,0){5}{\palecarre}
               \multiput(8,5)(1,0){2}{\palecarre}}
\put(-1,-1){\grille{11}{7}}
\coord{0}{0}{11}{7}
  \linethickness{.5mm}
\put(0,0){\rouge{\line(0,1){2}}}
\put(0,2){\rouge{\line(1,0){3}}}
\put(3,2){\rouge{\line(0,1){2}}}
\put(3,4){\rouge{\line(1,0){2}}}
\put(5,4){\rouge{\line(0,1){1}}}
\put(5,5){\rouge{\line(1,0){3}}}
\put(8,5){\rouge{\line(0,1){1}}}
\put(8,6){\rouge{\line(1,0){2}}}
\put(-2,2.8){$n\left\{\rule{0cm}{45pt}\right.$}
\put(0,-.5){$\underbrace{\hskip140pt}_{\displaystyle k}$}
\end{picture}\end{center}
\caption{A labelled path, and its area.}\label{fig0}
\end{figure}
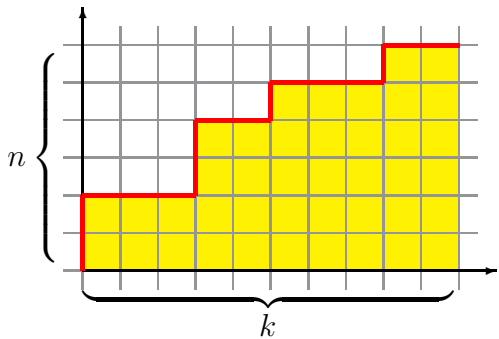

\subsection*{Labelled paths}
We now add labels to north steps of paths in $\Paths_{k,n}$.
We require these labels, going from $1$ to $n$, to be increasing whenever they lie on north steps having same horizontal coordinate. 
The underlying path, of a labelled path $\ell$, is said to be its \defn{shape}. 
We denote by $\LabelledPaths_{k,n}$, the set of labelled paths having shapes lying in $\Paths_{k,n}$. 
It is  easy to see that the number of labelled paths is $(k+1)^n$, with $q$-weighted enumeration given by the straightforward $q$-analog 
\begin{equation}
    \bleu{\sum_{\alpha\in \LabelledPaths_{k,n}} q^{\area(\alpha)}=([k+1]_q)^n}.
\end{equation}
Observe\footnote{Our reason for considering functions from $[n]$ to $[k+1]$ as labelled paths will become apparent in the sequel.} that labelled path $\ell$ may be bijectively turned into a function from $[n]:=\{1,2,\ldots,n\}$ to $[k+1]:=\{1,2,\ldots, k+1\}$, as follows.
We simply set $\ell(i):=j$, whenever $i$ lies along a north step having first coordinate equal to $j-1$. Thus, the fibers of the resulting function correspond to the labels of consecutive
north steps. We say that the \defn{set partition} of $\ell$, is the partition of $[n]$ into these fibers. 
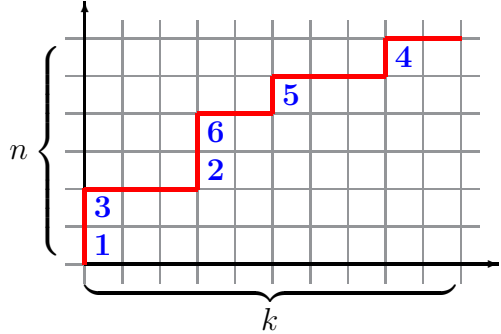
\begin{figure}[ht]
\setlength{\unitlength}{5mm}
\begin{center}
\begin{picture}(11,7)(0,-1)
\put(-1,-1){\grille{11}{7}}
\coord{0}{0}{11}{7}
  \linethickness{.5mm}
\put(0,0){\rouge{\line(0,1){2}}}
\put(0,2){\rouge{\line(1,0){3}}}
\put(3,2){\rouge{\line(0,1){2}}}
\put(3,4){\rouge{\line(1,0){2}}}
\put(5,4){\rouge{\line(0,1){1}}}
\put(5,5){\rouge{\line(1,0){3}}}
\put(8,5){\rouge{\line(0,1){1}}}
\put(8,6){\rouge{\line(1,0){2}}}
\put(-2,2.8){$n\left\{\rule{0cm}{45pt}\right.$}
\put(0,-.5){$\underbrace{\hskip140pt}_{\displaystyle k}$}
\put(0.25,0.25){\put(0,0){\bleu{\bf 1}}
                        \put(0,1){\bleu{\bf 3}}
                        \put(3,2){\bleu{\bf 2}}
                        \put(3,3){\bleu{\bf 6}}
                        \put(5,4){\bleu{\bf 5}}
                        \put(8,5){\bleu{\bf 4}}}
\end{picture}\end{center}
\caption{A labelled path with below area equal to $41$.}\label{figLabelled}
\end{figure}

The symmetric group $\S_n$ acts by permutation of labels, up to reordering labels that lie one above the other. 
In fact, it may be best to consider this in the context of the indentation sequence encoding of paths. In this point of view, a labelled path is simply
a permutation of this sequence. Thus, $k\times n$ labelled paths are just another name for length $n$ sequences of numbers between $0$ and $k$.
The shape  $\alpha$ (underlying path), of a labelled path $\ell$, is simply the increasing ordering of this sequence. Still we will keep using the path terminology, which is better adapted to
our upcoming study of parallelogram polyominoes.

It is classical that the corresponding Frobenius characteristic is given by the formula\footnote{We follow Macdonald's notation (see~\cite{macdonald}) for symmetric functions. We also use plethystic notations. See~\cite{livre} for more on this.}
\begin{eqnarray}
  \bleu{ {\rm Frob}(\LabelledPaths_{k,n})(\mbf{z})}&=&\bleu{ h_n[(k+1)\,\mbf{z}]}\label{chemins}\\
          &=&\bleu{ \sum_{\mu\vdash n} (k+1)^{\ell(\mu)} \frac{p_\mu(\mbf{z})}{z_\mu}}\label{eq:action}\\
         &=&\bleu{\sum_{\mu\vdash n} s_\mu(\underbrace{1,1,\ldots,1}_{k+1})\,s_\mu(\mbf{z})}.
\end{eqnarray}
Recall that this last equality says that 
    $$\bleu{s_\mu(k+1):=s_\mu(\underbrace{1,1,\ldots,1}_{k+1})}$$
is the multiplicity, in $\LabelledPaths_{k,n}$, of the irreducible representation indexed by the partition $\mu$. 
Taking into account the area, we have a direct $q$-analog of the previous formula
\begin{eqnarray}
   \bleu{\Frob(\LabelledPaths_{k,n})(\mbf{z};q)}& =&\bleu{h_n\!\left[\mbf{z}\,\frac{q^{k+1}-1}{q-1}\right] }\label{frobq}\\
      & =&\bleu{\sum_{\mu\vdash n}\frac{1}{z_\mu} \prod_{i\in \mu} \frac{\qn{i\,(k+1)}}{\qn{i}}\, p_i(\mbf{z})}\\
      &=& \bleu{\sum_{\mu\vdash n} s_\mu(1,q,\ldots,q^k)\,s_\mu(\mbf{z})}.
\end{eqnarray}

\section{Parallelogram polyominoes}
Beside setting up notations, the aim of this section is to recall basic (well-known) facts about parallelogram polyominoes.
For two paths $\alpha$ and $\beta$ in $\Paths_{k,n}$, we get a polyomino $\pi=(\alpha,\beta)$ if $\alpha$ stays ``above'' $\beta$.
One usually thinks of a parallelogram polyomino $\pi=(\alpha,\beta)$ as the region of the plane bounded above by the path $\alpha$, and below by the path $\beta$.
Except for endpoints, all points of the path $\beta=(q_0,\ldots,q_i,\ldots q_N)$ are required to be strictly below those of $\alpha=(p_0,\ldots,p_i,\ldots p_N)$. Hence, writing $q_i=(x_i',y_i')$, this is to say that $y'_i<y_j$ whenever $x_i'=x_j$, for $1\leq i,j\leq N$. 
We say that $n$ is the \defn{height} of $\pi$, and that $k$ is its \defn{width}; and we denote by $\bleux{\PolyominoParrallelogram_{k,n}}$ the set of parallelogram polyominoes of height $n$ and width $k$.
The number of $1\times 1$ boxes lying between the two paths is always larger or equal to $k+n-1$, and equality holds for \defn{ribbon shapes}.
Thus, it is natural to subtract this value in the definition of 
the \defn{area} of the polyomino $\pi$, denoted by  $\area(\pi)$. Hence ribbon shapes have zero area.
The \defn{area-enumerating polynomial} is then defined as
   \begin{equation}
      \bleux{\PolyominoParrallelogram_{k,n}(q):=\sum_{\pi\in \PolyominoParrallelogram_{k,n}} q^{\area(\pi)}}.
   \end{equation}
Clearly the reflection in the diagonal $x=y$  maps bijectively the set $\PolyominoParrallelogram_{k,n}$ on the set $\PolyominoParrallelogram_{n,k}$, we have the symmetry \bleux{$\PolyominoParrallelogram_{k,n}(q)=\PolyominoParrallelogram_{n,k}(q)$}.
This is reflected in the \defn{generating function} 
 \begin{eqnarray*}
     \bleux{\PolyominoParrallelogram(x,y;q)}&:=&\bleux{\sum_{(k,n)\in \N\times \N} \PolyominoParrallelogram_{k,n}(q)\,x^k\,y^n},\\ 
          &=&xy+x{y}^{2}+{x}^{2}y+x{y}^{3}+ ( 2+q ) {x}^{2}{y}^{2}+{x}^{3}y\\
         &&\quad\  +x{y}^{4}+ ( 3+2\,q+{q}^{2} ) {x}^{2}{y}^{3}+ ( 3+2\,q+{q}^{2} ) {x}^{3}{y}^{2}+{x}^{4}y\\
&&\quad\ +xy^5+( 4+3\,q+2\,{q}^{2}+{q}^{3} ) {x}^{2}{y}^{4}\\
&&\qquad\qquad + ( 6+6\,q+5\,{q}^{2}+2\,{q}^{3}+{q}^{4} ) {x}^{3}{y}^{3}\\
&&\qquad\qquad + ( 4+3\,q+2\,{q}^{2}+{q}^{3}) {x}^{4}{y}^{2} +x^5y+\ldots\nonumber
   \end{eqnarray*}
 which can be expressed in terms of Bessel functions (see \cite{bousquet}). When $q=1$, we have
\begin{eqnarray}
\bleu{\PolyominoParrallelogram_{k+1,n}(1)}=  \bleu{|\PolyominoParrallelogram_{k+1,n}|} &=&\bleu{s_{k,k}(\underbrace{1,1,\ldots,1}_{n+1})}\label{PPschur}\\
        &=&\bleu{\det\begin{pmatrix}h_k[n+1]  & h_{k+1}[n+1]\\[4pt] h_{k-1}[n+1] & h_{k}[n+1] \end{pmatrix}}\\
         &=&\bleu{\binom{n+k}{k}^2-\binom{n+k+1}{k+1}\binom{n+k-1}{k-1}}\\
         &=&\bleu{\frac{1}{k+1}\binom{n+k}{k}\binom{n+k-1}{k}}.\label{unlabeled}
\end{eqnarray}
Indeed this may be readily deduced using Lindstr\"om-Gessel-Viennot \cite{GV} approach to the enumeration of non-intersecting path configurations in terms of determinants. 
\def\ne{\line(1,1){1}\put(-0.95,0){\line(1,1){1}}}
\def\se{\line(1,-1){1}\put(-0.95,0){\line(1,-1){1}}}
\def\be{\bleu{\line(1,0){1}}}
\def\re{\rouge{\line(1,0){1}}}
\begin{figure}[ht]
\begin{center}
\begin{picture}(11,6)(0,0)
\put(0,.5){\multiput(0,0)(1,0){2}{\palecarre}
               \multiput(0,1)(1,0){6}{\palecarre}
              \multiput(3,2)(1,0){6}{\palecarre}
               \multiput(3,3)(1,0){7}{\palecarre}
               \multiput(5,4)(1,0){5}{\palecarre}
               \multiput(8,5)(1,0){2}{\palecarre}}
\put(-1,-1){\grille{11}{7}}
  \linethickness{.5mm}
\put(0,0){\rouge{\line(0,1){2}}}
\put(0,2){\rouge{\line(1,0){3}}}
\put(3,2){\rouge{\line(0,1){2}}}
\put(3,4){\rouge{\line(1,0){2}}}
\put(5,4){\rouge{\line(0,1){1}}}
\put(5,5){\rouge{\line(1,0){3}}}
\put(8,5){\rouge{\line(0,1){1}}}
\put(8,6){\rouge{\line(1,0){2}}}
\put(0,0){\bleu{\line(1,0){2}}}
\put(2,0){\bleu{\line(0,1){1}}}
\put(2,1){\bleu{\line(1,0){4}}}
\put(6,1){\bleu{\line(0,1){1}}}
\put(6,2){\bleu{\line(1,0){3}}}
\put(9,2){\bleu{\line(0,1){1}}}
\put(9,3){\bleu{\line(1,0){1}}}
\put(10,3){\bleu{\line(0,1){3}}}
\end{picture}
\begin{picture}(17,4)(0,0)
\grille{17}{4}
\put(1,1){\coord{0}{0}{17}{4}}
  \linethickness{.5mm}
\put(1,1){\put(0,0){\ne}  \put(1,1){\ne}  \put(2,2){\se} \put(3,1){\be} \put(4,1){\be} \put(5,1){\ne}  \put(6,2){\ne} 
               \put(7,3){\se} \put(8,2){\be}  \put(9,2){\ne} \put(10,3){\be} \put(11,3){\se} \put(12,2){\be} \put(13,2){\re}
                \put(14,2){\se} \put(15,1){\se}  }
\end{picture}
\end{center}
\caption{A Parallelogram polyomino and the corresponding Motzkin path.}
\label{figPP}
\end{figure}
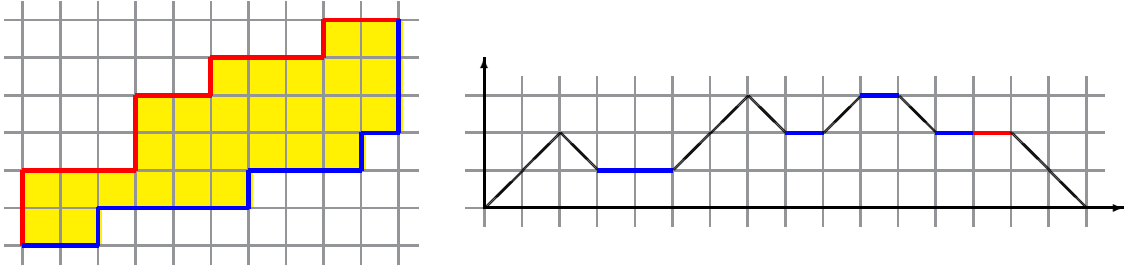

\subsection*{Alternate descriptions} We may shed new light, on our study of parallelogram polyominoes, by exploiting alternate encodings for them. For paths $\alpha,\beta\in \LabelledPaths_{k+1,n}$, respectively\footnote{The shift in indices is intentional here.} given as increasing sequences $\alpha=a_0a_1\cdots a_k$ and $\beta=b_1b_2\cdots b_{k+1}$, the pair $\pi=(\alpha,\beta)$ is a parallelogram polyomino if and only if $a_0=0$, $b_{k+1}=n+1$, and $a_i>b_i$, for $1\leq i\leq k$. Hence, we may identify parallelogram polyominoes with \defn{semi-standard tableaux} (see Figure~\ref{figPPschur}) of shape \bleux{$k^2$}, with values in $\{1,2,\ldots, n+1\}$, thus explaining formula~\pref{PPschur}. Naturally, we may also describe parallelogram polyominoes in terms of indentation sequences,  thus getting semi-standard tableaux of shape $n^2$, with values in $\{1,2,\ldots,n\}$. Going from the first encoding of a given polyomino to the second one, establishes a classical bijection between the two equally numerous families of semi-standard tableaux. 
\def\aa#1{\put(.3,.3){$a_{#1}$}}
\def\bb#1{\put(.3,.3){$b_{#1}$}}
\begin{figure}[ht]
\setlength{\unitlength}{7mm}
\begin{center}
\begin{picture}(8,2)(0,0)
 \linethickness{.3mm}
 \put(0,1){\put(0,0){\aa{1}} \put(1,0){\aa{2}}\put(2,0){\aa{3}} \put(6.5,0){\aa{k}}}
 \put(0,0){\bb{1}} \put(1,0){\bb{2}}\put(2,0){\bb{3}} \put(6.5,0){\bb{k}}
\multiput(0,0)(0,1){3}{\line(1,0){3.5}} 
\multiput(0,0)(1,0){4}{\line(0,1){2}}
\multiput(6,0)(0,1){3}{\line(1,0){1.5}} 
\multiput(6.5,0)(1,0){2}{\line(0,1){2}}
\multiput(4.5,0)(0,1){3}{$\ldots$} 
 \end{picture}\end{center}
\caption{Semi-standard tableau of shape $k^2$; $a_i\leq a_{i+1}$ and $a_i>b_i$.}\label{figPPschur}
\end{figure}

There is also a well-known bijection (see~\cite{viennot}) between parallelogram polyominoes and \defn{Primitive Motzkin Paths}. Recall that these are the paths, in $\N\times \N$, going from $(0,0)$ to $(N,0)$, with steps either north-east, east (either \rouge{red} or \bleu{blue}), or south-east. Thus, the consecutive points, along the path, are linked as follows
    $$\bleux{(x_{i+1},y_{i+1})}=\begin{cases}
      \bleux{(x_i,y_i)+{(1,1)}} & \text{a {north-east step}, or } \\[4pt]
      \bleux{(x_i,y_i)+\rouge{(1,0)}} & \text{a \rouge{red}-{east step}, or } \\[4pt]
     \bleux{(x_i,y_i)+\bleu{(1,0)}} & \text{a \bleu{blue}-{east step}, or } \\[4pt]
     \bleux{(x_i,y_i)+{(1,-1)}} & \text{a {south-east step}}.
\end{cases}$$
Primitive paths are those that never return to the horizontal, except at endpoints. 
Motzkin paths are often simply presented in terms of \defn{Motzkin words}. Recall that these are the words $\omega=w_1w_2\cdots w_N$, on the alphabet $\{d,\overline{d},r,b\}$, such that 
\begin{enumerate}\itemsep=4pt
   \item $\big|\omega_{\leq i}\big|_d\geq \big|\omega_{\leq i}\big|_{\overline{d}}$, for all $1\leq i\leq N-1$,
   \item $\big|\omega\big|_d= \big|\omega\big|_{\overline{d}},$
\end{enumerate}
where $|\omega|_d$ (resp. $|\omega_{\leq i}|_{\overline{d}}$)  stands for the number of $d's$ occurring in $\omega$ (resp. ${\overline{d}}$), and $\omega_{\leq i}$ denotes the prefix of length $i$ of $\omega$. The word corresponds to a primitive path, precisely when all inequalities are strict in the first condition. 
Each letter encodes one of the possible steps: $d$ for north-east, $r$ for red-east, $b$ for blue-east, and $\overline{d}$ for south-east. For a polyomino $\pi=(\alpha,\beta)$, with $\alpha=u_1u_2\cdots u_N$
and $\beta=\alpha=v_1v_2\cdots v_N$ given as words in $\{x,y\}$, Viennot's encoding \cite{viennot} consists in setting
   $$w_i:=\begin{cases}
     d & \text{if}\  (u_i,v_i)=(x,y),\\[4pt]
     r & \text{if}\  (u_i,v_i)=(y,y),\\[4pt]
     b & \text{if}\  (u_i,v_i)=(x,x),\\[4pt]
     \overline{d} & \text{if}\  (u_i,v_i)=(y,x).
\end{cases}$$
See Figure~\ref{figPP} for an example.

Lastly, a parallelogram polyomino may be encoded as a word in the ordered alphabet 
$$\AA=\{\bz,1,\bu,2,\bd,3,\bt,4,\cdots\}.$$
To do this end, we start with its primitive Motzkin path encoding $\pi$,
and successively replace each step in $\pi$ by a $m$-sequence of letters in the $\AA$, $0\leq m\leq 2$, according to the following rules.
\begin{itemize}
\item A down step is replaced by an empty sequence;
\item an up step, at height $j$, is replaced by the $2$-sequence $\overline{j}\,(j+1)$;
\item a red horizontal step, at height $j$, is replaced by $j$;
\item a blue horizontal step at height $j$ is replaced by $\overline{j}$.
\end{itemize}
Figure \ref{fig:Ppiw} illustrates this process.
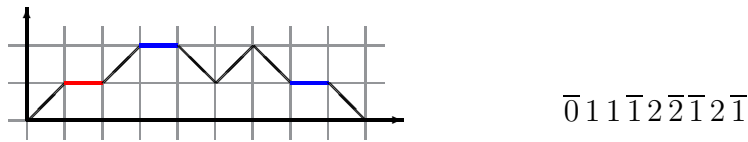
\begin{figure}[ht]
\begin{center}
\begin{picture}(11,5)(2,0)
\grille{10}{3}
\put(1,1){\coord{0}{0}{10}{3}}
  \linethickness{.5mm}
\put(1,1){\put(0,0){\ne}  \put(1,1){\re}  \put(2,1){\ne} \put(3,2){\be} \put(4,2){\se} \put(5,1){\ne}  \put(6,2){\se} 
               \put(7,1){\be} \put(8,1){\se}  }
\end{picture}
\begin{picture}(6,3)(0,0)
\put(2,1){$\bz\,1\,1\,\bu\,2\,\bd\,\bu\,2\,\bu$}
\end{picture}

\end{center}
\caption{From Motzkin path to word in $\AA$.}
\label{fig:Ppiw}
\end{figure}

\begin{remark}
 The correspondence between parallelogram polyominoes and primitive Motz\-kin paths, suggests that we should study ``sequences'' of parallelogram polyominoes which correspond to general Motzkin paths. In particular, this will make apparent more ties with the study of parking functions that correspond to Dyck path touching the diagonal at specific points.
 \end{remark}


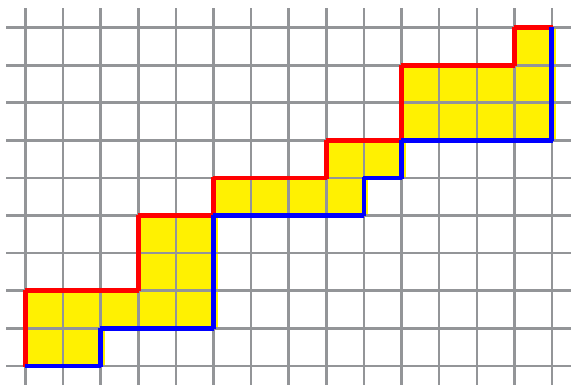
\begin{figure}[ht]
\begin{center}
\begin{picture}(15,10)(0,0)
\put(0,.5){\multiput(0,0)(1,0){2}{\palecarre}
               \multiput(0,1)(1,0){5}{\palecarre}
              \multiput(3,2)(1,0){2}{\palecarre}
               \multiput(3,3)(1,0){2}{\palecarre}
               \multiput(5,4)(1,0){4}{\palecarre}
               \multiput(8,5)(1,0){2}{\palecarre}
               \multiput(10,6)(1,0){4}{\palecarre}
               \multiput(10,7)(1,0){4}{\palecarre}
               \multiput(13,8)(1,0){1}{\palecarre}}
\put(-1,-1){\grille{15}{10}}
  \linethickness{.5mm}
\put(0,0){\rouge{\line(0,1){2}}}
\put(0,2){\rouge{\line(1,0){3}}}
\put(3,2){\rouge{\line(0,1){2}}}
\put(3,4){\rouge{\line(1,0){2}}}
\put(5,4){\rouge{\line(0,1){1}}}
\put(5,5){\rouge{\line(1,0){3}}}
\put(8,5){\rouge{\line(0,1){1}}}
\put(8,6){\rouge{\line(1,0){2}}}
\put(10,6){\rouge{\line(0,1){2}}}
\put(10,8){\rouge{\line(1,0){3}}}
\put(13,8){\rouge{\line(0,1){1}}}
\put(13,9){\rouge{\line(1,0){1}}}
\put(0,0){\bleu{\line(1,0){2}}}
\put(2,0){\bleu{\line(0,1){1}}}
\put(2,1){\bleu{\line(1,0){3}}}
\put(5,1){\bleu{\line(0,1){3}}}
\put(5,4){\bleu{\line(1,0){4}}}
\put(9,4){\bleu{\line(0,1){1}}}
\put(9,5){\bleu{\line(1,0){1}}}
\put(10,5){\bleu{\line(0,1){1}}}
\put(10,6){\bleu{\line(1,0){4}}}
\put(14,6){\bleu{\line(0,1){3}}}
\end{picture}
\end{center}
\caption{A parallelogram polyomino sequence.} \label{figPPS}
\end{figure}

\section{Parallelogram polyominoes, as indexing set of $\SL{2}$-invariants}
One of our motivation, for our current study of the combinatorics of parallelogram polyomino, is to tie this study to that of parking functions and labelled intervals in the Tamari poset.
Recall that more and more evidence shows that this combinatorics is intimately related to the study of the $\S_n$-modules of bivariate and trivariate diagonal harmonics.
Along these lines, it is interesting to observe that the number of polyominoes corresponds to the dimension of the space of $\SL{2}$-invariants of weight $k+1$ of the Grassmanian $\Gr(2,n+1)$, see
 \cite[page 238]{mukai}. We make this more explicit as follows.
Let $X$ be the matrix of $2\times n$ variables
     $$X=\begin{pmatrix} x_1, & x_2,& \cdots &, x_n\\
                                     y_1, & y_2,& \cdots &, y_n \end{pmatrix}.$$
The special group acts on polynomials $f(X)$, in $\C[X]$, by left multiplication of $X$ by matrices in  $\SL{2}$. 
Invariants under this action are precisely the polynomial expressions in the $2\times 2$ minors\footnote{These are clearly $\SL{2}$-invariants.} of $X$.                                 
To construct an explicit linear basis of the resulting ring,  $\InvariantsSL_n:=\C[X]^{\SL{2}}$, one needs only take into account the Pl\"ucker relations on these $2\times 2$ minors.
More precisely, as described in~\cite{miller}, we have the exact sequence
\begin{equation}
    \bleu{0\longrightarrow \Plucker_n \longrightarrow \C[Y_{i,j}]_{1\leq i<j\leq n} \longrightarrow \InvariantsSL_n \longrightarrow 0},
\end{equation}
  where the $Y_{i,j}$ are variables, the middle arrow sends $Y_{i,j}$ to $X_{i,j}$ (the $2\times 2$ minor corresponding to the choice of columns $i$ and $j$ in $X$), and $\Plucker_n$ stands for the ideal generated by the Pl\"ucker relations
     \begin{equation}
        \bleux{Y_{i\ell}Y_{jk}-Y_{ik}Y_{j\ell}+Y_{ij}Y_{k\ell} =0},\qquad {\rm for}\quad\bleux{1\leq i<j<k<\ell\leq n}.
     \end{equation}
 Moreover, using a somewhat reformulated result also discussed in~\cite[Thm 14.6]{miller}, it follows that a linear basis of $\InvariantsSL_n=\C[X]^{\SL{2}}$ may be indexed by parallelogram polyominoes. In fact, we do this in a graded fashion\footnote{Considering $2\times 2$ minors to be of degree $1$.}, constructing for each $d\geq0$ the $d$-homogeneous part of the basis as a collection of product of $d$ minors. The characterizing property of these products is that they must not be divisible by a binomial $X_{ij}X_{k\ell}$, for $\{i,j\}$ incomparable to $\{k,\ell\}$ relative to the following order on pairs of integers. We set
  $$\bleux{\{i,j\} \prec \{k,\ell\}},\qquad {\rm iff}\qquad \bleux{i<k}\ {\rm and}\ \bleux{j<\ell},$$
  assuming, without loss of generality, that $i<j$ and $k<\ell$.
  
Now, given a polyomino $\pi=(\alpha,\beta)\in \PolyominoParrallelogram_{d+1,n-1}$, for which the height sequences of $\alpha$ and $\beta$ are respectively\footnote{The shift in indices is intentional here.} $a_0a_1\cdots a_d$ and $b_1b_2\cdots b_{d+1}$, we consider the minor monomial 
\begin{equation}
  \bleux{X_\pi:=\prod_{i=1}^d X_{(b_i+1,a_i+1)}}.
\end{equation}
The fact that the path $\beta$ remains below the path $\alpha$ is equivalent to the fact that the minor monomial $X_\pi$ satisfies the charactering property of the previous paragraph. For example, the minor monomial associated to the polyomino $\pi=(2224455566,0011112223)$ of Figure~\ref{fig1} is
    $$X_\pi= X_{13}X_{23}^2X_{25}^2X_{36}^3X_{47}.$$
 \begin{proposition}
The family $\{X_\pi\}_\pi$, with $\pi$ varying in the set of parallelogram polyominoes $\PolyominoParrallelogram_{d+1,n-1}$, constitute a linear basis for the degree $d$ homogeneous component of the ring $\InvariantsSL_n$. 
\end{proposition}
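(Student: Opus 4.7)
The plan is to identify $\{X_\pi\}_\pi$ with the standard monomials for the Pl\"ucker presentation, and then invoke the classical standard monomial theorem, essentially as in \cite[Thm 14.6]{miller}.

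First, I would verify that the map $\pi\mapsto X_\pi$ is a bijection between $\PolyominoParrallelogram_{d+1,n-1}$ and the degree-$d$ \emph{standard} monomials, namely products $\prod_{s=1}^{d} X_{P_s}$ whose factors $P_s=\{i_s,j_s\}$ form a strict chain $P_1\prec P_2\prec\cdots\prec P_d$ in the poset of $2$-element subsets of $\{1,\ldots,n\}$. With the height-sequence encoding $\alpha=a_0a_1\cdots a_d$, $\beta=b_1\cdots b_{d+1}$, setting $P_i:=\{b_i+1,a_i+1\}$, the polyomino inequality $a_i>b_i$ gives $b_i+1<a_i+1$, while strict increase of both coordinate sequences follows from the SSYT-of-shape-$d^2$ description recorded in Figure~\ref{figPPschur}. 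Conversely, any such chain reads off as a valid polyomino, so the two sets are in bijection. In particular, a product of $d$ minors is standard in this sense precisely when it is not divisible by any $X_{ij}X_{k\ell}$ with $\{i,j\}$ and $\{k,\ell\}$ incomparable, matching the characterizing property stated in the text.

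Second, I would establish that the $X_\pi$ span the degree-$d$ component of $\InvariantsSL_n$. For $i<j<k<\ell$, the Pl\"ucker relation
$$Y_{i\ell}Y_{jk}=Y_{ik}Y_{j\ell}-Y_{ij}Y_{k\ell}$$
expresses the unique incomparable quadratic product $Y_{i\ell}Y_{jk}$ as a difference of two products involving only comparable pairs (since $\{i,k\}\prec\{j,\ell\}$ and $\{i,j\}\prec\{k,\ell\}$). Choosing a term order for which $Y_{i\ell}Y_{jk}$ is the leading monomial of each such relation, every non-standard product of minors is divisible by some leading term and can therefore be rewritten as a linear combination of strictly smaller monomials modulo $\Plucker_n$. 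Noetherianity of the term order yields termination, and the well-known fact that the Pl\"ucker quadrics form a (quadratic) Gr\"obner basis of $\Plucker_n$ yields confluence; so the standard monomials span modulo $\Plucker_n$, which by the exact sequence gives spanning in $\InvariantsSL_n$.

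Third, linear independence will be obtained by a dimension count. Applying \pref{PPschur}--\pref{unlabeled} with $k=d$ and the appropriate shift of $n$,
$$|\PolyominoParrallelogram_{d+1,n-1}|=s_{d,d}(\underbrace{1,1,\ldots,1}_{n}),$$
which is the classical value of the degree-$d$ Hilbert coefficient of the homogeneous coordinate ring of $\Gr(2,n)$ in its Pl\"ucker embedding. By the First Fundamental Theorem of invariant theory for $\SL{2}$, this is exactly $\dim(\InvariantsSL_n)_d$. A spanning set of the right cardinality is automatically a basis.

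The main obstacle I anticipate is the confluence verification in Step~2: absent an appeal to standard-monomial-theory literature, one must exhibit an explicit term order and check that all $S$-pairs among the Pl\"ucker quadrics reduce to zero. Once that is granted, Step~1 is combinatorial bookkeeping and Step~3 is a direct dimension comparison.
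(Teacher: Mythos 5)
Your route --- identify the $X_\pi$ with the standard monomials of the Pl\"ucker presentation, obtain spanning from the straightening relations, and obtain independence from the count $\dim(\InvariantsSL_n)_d=s_{d,d}(1,\ldots,1)=|\PolyominoParrallelogram_{d+1,n-1}|$ --- is exactly the argument the paper intends; the paper simply delegates the standard-monomial input to \cite[Thm 14.6]{miller} and only records the combinatorial dictionary. So the strategy matches, but there is one concrete error in your Step 1 that you must repair.

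The factors of $X_\pi$ do \emph{not} form a strict chain, and the two coordinate sequences are not strictly increasing: the height sequences bounding a parallelogram polyomino are only \emph{weakly} increasing, so repeated factors occur. The paper's own example $X_\pi=X_{13}X_{23}^2X_{25}^2X_{36}^3X_{47}$ contains $X_{23}^2$, and also the product $X_{23}X_{25}$, whose index pairs satisfy $2=2$ and $3<5$ and hence are \emph{not} related by the strictly defined $\prec$ of the text; taken literally, your ``strict chain'' set is strictly smaller than $\PolyominoParrallelogram_{d+1,n-1}$ and the bijection of Step 1 fails. The correct statement is that the factors form a weak chain (a multichain) for the componentwise order $\{i,j\}\preceq\{k,\ell\}$ iff $i\le k$ and $j\le\ell$, and ``incomparable'' in the divisibility criterion must be read relative to this weak order (i.e.\ $i<k$ but $j>\ell$, or the reverse). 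With that reading, pairwise comparability of a multiset of pairs is equivalent to its being totally orderable, so the identification with semi-standard tableaux of shape $d^2$ (weakly increasing rows, strictly increasing columns), hence with $\PolyominoParrallelogram_{d+1,n-1}$ via \pref{PPschur}, goes through. The remainder of your argument is sound: the Pl\"ucker quadric rewrites the unique incomparable quadratic product $Y_{i\ell}Y_{jk}$ in terms of comparable ones, termination follows from a suitable term order (this is the content of the straightening law, and for spanning you do not even need the full Gr\"obner-basis confluence you worry about, since independence is supplied separately), and the dimension count follows from the Cauchy decomposition of $\C[X]$ as a $GL_2\times GL_n$-module, which identifies $(\InvariantsSL_n)_d$ with the irreducible $GL_n$-module of highest weight $(d,d)$.
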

 It follows that, for a given $n$, the Hilbert series of $\InvariantsSL_n$ is given   by the formula
     \begin{eqnarray}
       \bleu{\InvariantsSL_n(x)}& =&\bleu{ \sum_{d=0}^\infty |\PolyominoParrallelogram_{k+1,n}|\, x^k}\\
           &=&\bleu{\frac{1}{1-x^{2\,n-1}}  \sum_{k=0}^{n-2} \frac{1}{k+1}\,\binom{n-2}{k}\binom{n-1}{k}\, x^k }.
     \end{eqnarray}
 This approach may readily be expanded to cover families of $r$ non-intersecting paths from $(0,0)$ to $(k,n)$, which also parametrize $\SL{r}$-invariants. The corresponding enumeration is easily obtained in terms of Schur functions (using Lindstr\"om-Gessel-Viennot \cite{GV}), giving
 \begin{equation}
    \bleu{|\PolyominoParrallelogram_{k+1,n}^{(r)}| = s_{k^r}(\underbrace{1,1,\ldots,1}_{n+1})},
 \end{equation}
 where $\PolyominoParrallelogram_{k,n}^{(r)}$ denotes the set of such families, and $k^r$ stands for the rectangular partition $(\underbrace{k,k,\ldots,k}_r)$.

\subsection*{Action of $\S_n$ on $\InvariantsSL_n$}
We may refine the graded enumeration of the ring $\InvariantsSL_n$, by considering its irreducible decomposition with respect to the action of $\S_n$, which corresponds to permutations of the $n$ columns of $X$.
Thus we consider the graded Frobenius characteristic
    \begin{equation}
       \bleux{\Frob(\InvariantsSL_n)(\mbf{z};x)=\sum_{d=0}^\infty \Frob(\InvariantsSL_n^{(d)})(\mbf{z})\, x^d},
    \end{equation}
 where $\InvariantsSL_n^{(d)}$ stands for the degree $d$ homogeneous component of $\InvariantsSL_n$.
 
 To calculate this, we follow an approach due to Littlewood for the calculation of the character of the restriction to $\S_n$ of an action of $GL_n$.
 It follows from general principles that the character of the action of $GL_n$ on $\InvariantsSL_n^{(d+1)}$, the degree $d+1$ component of $\InvariantsSL_n$, is equal to the schur function $s_{dd}(q_1,\ldots,q_n)$. 
 The corresponding character value, for the restriction of this action to $\S_n$, is obtained by evaluating this Schur function at the eigenvalues of permutations. 
 This may be easily calculated, without explicit knowledge of these eigenvalues, in the following manner. 
 
 We start by expanding $s_{dd}(q_1,\ldots,q_n)$ in terms of the power-sum symmetric function, thus turning the required evaluation into the calculation of the sums of $k^{\rm th}$-powers of the eigenvalues of any given permutation matrix $\sigma$. But this turns out to be equal to the number of fixed points of the $k^{\rm th}$-power of $\sigma$. If $\mu$ is the partition giving cycle sizes of $\sigma$, this number of fixed points may simply be expressed as
 \begin{equation}
     \bleux{{\rm fix}(\sigma^k)}=\bleux{\sum_{\mu_i|k}\mu_i}.
 \end{equation}
Hence, the Frobenius characteristic that we are looking for is given by the formula
 \begin{equation}
    \bleu{ \InvariantsSL_n^{(d+1)}(\mbf{z})}=\bleu{\sum_{\mu\vdash n} s_{dd}\big|_{p_k\leftarrow \varphi_k(\mu)} \frac{p_\mu(\mbf{z})}{z_\mu}},
 \end{equation}
 where we set $\varphi_k(\mu):=\sum_{\mu_i|k}\mu_i$, {\sl i.e.} the sum of the parts of $\mu$ that divide $k$.
 

\section{Labelled parallelogram polyominoes} A \defn{labelled parallelogram polyomino} is obtained by labeling (as in Section~\ref{paths}) the top path of a parallelogram polyomino.  
Equivalently, this may be presented in terms of \defn{labelled Motzkin paths}, with labels associated with either north-east steps, or red-east steps. In this case, the relevant increasing-labelling condition simply states that labels should be increasing whenever we they occur on consecutive north-east or red-east steps.
Figure~\ref{figPP} gives an example of such a labelling, in both encodings.
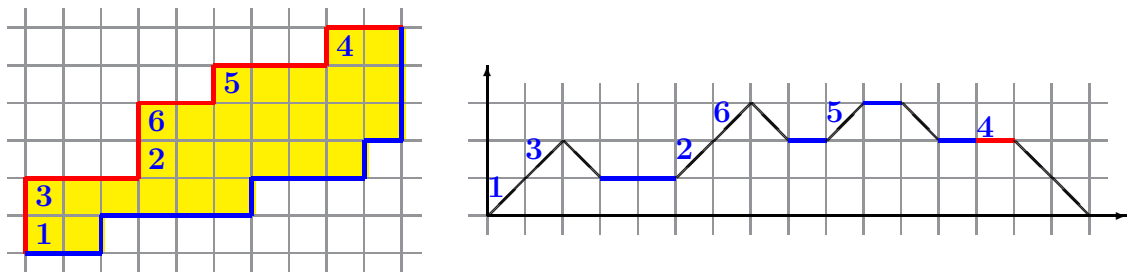
\begin{figure}[ht]
\begin{center}
\begin{picture}(11,7)(0,0)
\put(0,.5){\multiput(0,0)(1,0){2}{\palecarre}
               \multiput(0,1)(1,0){6}{\palecarre}
              \multiput(3,2)(1,0){6}{\palecarre}
               \multiput(3,3)(1,0){7}{\palecarre}
               \multiput(5,4)(1,0){5}{\palecarre}
               \multiput(8,5)(1,0){2}{\palecarre}}
\put(-1,-1){\grille{11}{7}}
  \linethickness{.5mm}
\put(0,0){\rouge{\line(0,1){2}}}
\put(0,2){\rouge{\line(1,0){3}}}
\put(3,2){\rouge{\line(0,1){2}}}
\put(3,4){\rouge{\line(1,0){2}}}
\put(5,4){\rouge{\line(0,1){1}}}
\put(5,5){\rouge{\line(1,0){3}}}
\put(8,5){\rouge{\line(0,1){1}}}
\put(8,6){\rouge{\line(1,0){2}}}
\put(0,0){\bleu{\line(1,0){2}}}
\put(2,0){\bleu{\line(0,1){1}}}
\put(2,1){\bleu{\line(1,0){4}}}
\put(6,1){\bleu{\line(0,1){1}}}
\put(6,2){\bleu{\line(1,0){3}}}
\put(9,2){\bleu{\line(0,1){1}}}
\put(9,3){\bleu{\line(1,0){1}}}
\put(10,3){\bleu{\line(0,1){3}}}
\put(0.25,0.25){\put(0,0){\bleu{\bf 1}}
                        \put(0,1){\bleu{\bf 3}}
                        \put(3,2){\bleu{\bf 2}}
                        \put(3,3){\bleu{\bf 6}}
                        \put(5,4){\bleu{\bf 5}}
                        \put(8,5){\bleu{\bf 4}}}
\end{picture}
\begin{picture}(17,4)(0,0)
\grille{17}{4}
\put(1,1){\coord{0}{0}{17}{4}}
  \linethickness{.5mm}
\put(1,1){\put(0,0){\ne}  \put(1,1){\ne}  \put(2,2){\se} \put(3,1){\be} \put(4,1){\be} \put(5,1){\ne}  \put(6,2){\ne} 
               \put(7,3){\se} \put(8,2){\be}  \put(9,2){\ne} \put(10,3){\be} \put(11,3){\se} \put(12,2){\be} \put(13,2){\re}
                \put(14,2){\se} \put(15,1){\se}  }
\put(1,1.5){\put(0,0){\bleu{\bf 1}}
                        \put(1,1){\bleu{\bf 3}}
                        \put(5,1){\bleu{\bf 2}}
                        \put(6,2){\bleu{\bf 6}}
                        \put(9,2){\bleu{\bf 5}}
                        \put(13,1.6){\bleu{\bf 4}}}
\end{picture}
\end{center}
\caption{A labelled parallelogram polyomino.}\label{fig_polyomino}
\label{fig1}
\end{figure}

We  say that the underlying parallelogram polyomino is the \defn{shape} of the labelled parallelogram polyomino, and denote by $\Labelledpolyominoes_\pi$ the set of labelled parallelogram polyominoes of shape $\pi$.
We also denote by $\Labelledpolyominoes_{k,n}$ the set of labelled parallelogram polyominoes of height $n$ and width $k$, so that
   $$\bleux{\Labelledpolyominoes_{k,n}=\bigcup_{\pi\in \PolyominoParrallelogram_{k,n}}\Labelledpolyominoes_\pi}.$$

A maximal length sequence of north steps, in a path $\alpha$, is said to be a \defn{big-step} of $\alpha$. Clearly, for a height $n$ path $\alpha$, the sequence of big-step lengths in $\alpha$ forms a composition of $n$. We denote it by $\bleux{\gamma(\alpha)}$, also setting $\bleux{\gamma(\pi):=\gamma(\alpha)}$ in the case of a parallelogram polyomino $\pi=(\alpha,\beta)$. Recall that it is customary to write \bleux{$\nu\models n$} when $\nu$ is a \defn{composition of} $n$.

\begin{remark}\label{la_remarque} It is surely worth mentioning that, when $k=n$, ``classical''  Parking Functions may be obtained as special instances of parallelogram polyomino (sequences\footnote{This is needed to take into account the possibility that the two paths may touch one another.}), in fact in at least three different ways. The first is obtained by the simple device of constraining the lower path to be equal to the \defn{zigzag path}, the one who has height sequence equal to $012\cdots (n-1)$. 
In a similar manner, we may fix the lower path to be the one that follows the lower boundary of the surrounding rectangle. Still, an even more natural way is to consider 
labelled parallelogram polyominoes sequences whose shape is symmetric with respect to the diagonal $x=y$. 
All of these correspondences are compatible with the action on labels, and area is easy to track. 
Hence, most of the following formulas may be considered as extensions of analogous formulas for parking functions, potentially in three different ways. 
\end{remark}

For a height $n$ parallelogram polyomino $\pi=(\alpha,\beta)$, the symmetric group $\S_n$ acts by permutation on the set $\Labelledpolyominoes_\pi$, up to a reordering of labels along big-steps. This implies that
 \begin{equation}
   \bleux{|\Labelledpolyominoes_\pi|=\binom{n}{\gamma(\pi)}},
 \end{equation}  
where we use the classical multinomial coefficient notation
   $$\binom{n}{\nu}=\frac{n!}{\nu_1!\nu_2!\cdots \nu_k!},\qquad {\rm when}\qquad \nu=(\nu_1,\nu_2,\ldots,\nu_k).$$
It follows (see Proposition~\ref{prop_frob_pol}) that
\begin{proposition}
The number of labelled parallelogram polyominoes is given by the formula
   \begin{equation}\label{labelled}
      \bleu{|\Labelledpolyominoes_{k,n}| = k^{n-1}\,\binom{n+k-2}{k-1}},
   \end{equation}
 and we have the identity
    \begin{equation}\label{labelled2}
      \bleu{k^{n-1}\,\binom{n+k-2}{k-1} =\sum_{\pi\in \PolyominoParrallelogram_{k,n}} \binom{n}{\gamma(\pi)}}.
   \end{equation} 
   \end{proposition}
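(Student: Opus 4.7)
The two identities \eqref{labelled} and \eqref{labelled2} are equivalent, so I need only establish one of them. For a shape $\pi$ with big-step composition $\gamma(\pi)=(\gamma_1,\ldots,\gamma_r)$, a labelling amounts to partitioning the label set $\{1,\ldots,n\}$ into blocks of sizes $\gamma_1,\ldots,\gamma_r$ (the bottom-to-top order within each block is then forced to be increasing). Hence $|\Labelledpolyominoes_{\pi}|=\binom{n}{\gamma(\pi)}$, and summing over shapes yields
$$|\Labelledpolyominoes_{k,n}| \;=\; \sum_{\pi\in\PolyominoParrallelogram_{k,n}} \binom{n}{\gamma(\pi)}.$$

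To prove \eqref{labelled}, I would follow the route flagged in the statement and extract it as a specialization of Proposition~\ref{prop_frob_pol}. The $\S_n$-module $\Labelledpolyominoes_{k,n}$ decomposes shape by shape, and each component is the permutation representation induced from the trivial representation of the Young subgroup $\S_{\gamma(\pi)}=\S_{\gamma_1}\times\cdots\times\S_{\gamma_r}$. It follows that
$$\Frob(\Labelledpolyominoes_{k,n})(\mbf z) \;=\; \sum_{\pi\in \PolyominoParrallelogram_{k,n}} h_{\gamma(\pi)}(\mbf z).$$
Taking the dimension of this representation --- for instance by applying the linear functional that sends $h_\nu \mapsto \binom{n}{\nu}$ --- returns $\sum_\pi \binom{n}{\gamma(\pi)}$, so the problem reduces to showing that the closed form for this Frobenius characteristic specializes to $k^{n-1}\binom{n+k-2}{k-1}$.

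The crux is the evaluation of the multinomial sum $\sum_\pi\binom{n}{\gamma(\pi)}$. I would stratify by the composition $\gamma\models n$ with $r$ parts: the number of top paths $\alpha$ with $\gamma(\alpha)=\gamma$ that start with a north step and end with an east step is a binomial counting how the $k$ horizontal steps interleave the $r$ big-steps, and the number of valid bottom paths $\beta$ below such an $\alpha$ can be read off from a coverage condition on the open integer intervals $(d_i,d'_i)$ (derived from the strict-below requirement on indentation sequences). After swapping the order of summation, a Vandermonde-type identity should collapse the resulting double sum to $k^{n-1}\binom{n+k-2}{k-1}$. The main obstacle is precisely this final simplification; a cleaner alternative would be a direct bijection between $\Labelledpolyominoes_{k,n}$ and $[k]^{n-1}\times\Paths_{k-1,n-1}$, constructed (for example) by recording the column assigned to each label other than label~$1$ together with a residual lattice-path encoding of $\beta$.
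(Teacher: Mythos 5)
Your first step is sound and matches the paper: the orbit of labellings of a fixed shape $\pi$ is the permutation module on set partitions of $[n]$ into blocks of sizes $\gamma(\pi)$, so $|\Labelledpolyominoes_\pi|=\binom{n}{\gamma(\pi)}$ and \pref{labelled} and \pref{labelled2} are indeed equivalent. The problem is that you never actually prove either one. Everything after the reduction is conditional: the stratified double sum is not set up (and stratifying by $\gamma\models n$ alone cannot work as stated, since the number of admissible bottom paths $\beta$ depends on the full height sequence of $\alpha$, not merely on its big-step composition), the ``Vandermonde-type identity should collapse'' step is asserted rather than performed, and the bijection with $[k]^{n-1}\times \Paths_{k-1,n-1}$ is named as a ``cleaner alternative'' but not constructed. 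So the crux of the proposition --- the evaluation $\sum_{\pi}\binom{n}{\gamma(\pi)}=k^{n-1}\binom{n+k-2}{k-1}$ --- is left open.

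For comparison, the paper closes exactly this gap with a cyclic-lemma argument (in the proof of \pref{eq:Frob}, of which the present proposition is the dimension specialization). One starts from pairs $(\ell',\beta')\in \LabelledPaths_{k-1,n}\times \Paths_{k-1,n-1}$, a set of cardinality $k^{n}\binom{n+k-2}{k-1}$; one appends a final east step to the top path and shifts the bottom path right by one unit and appends a final north step; one then glues each path periodically into a bi-infinite path. By the cyclic lemma of Aval--D'Adderio--Dukes--Le~Borgne, the two bi-infinite paths have exactly $k$ transversal intersection points, and exactly one of the $k$ resulting windows is a genuine parallelogram polyomino; moreover the cyclic shift preserves the set partition of the labels. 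This exhibits the $k$-to-$1$ map whose existence you correctly guessed (your target $[k]^{n-1}\times\Paths_{k-1,n-1}$ has the right cardinality), and it simultaneously yields the Frobenius formula, not just the count. If you want to complete your proof along your own lines, constructing that correspondence explicitly is the missing work; as written, the proposal identifies the right intermediate objects but supplies no mechanism.
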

 It may be worth mentioning that we have the nice generating function
    \begin{equation}\label{labelled3}
      \bleux{\sum_{n=0}^\infty k^{n-1}\,\binom{n+k-2}{k-1} x^n=\left(\frac{1}{1-k\,x}\right)^k}.
   \end{equation}
The following result provides a common refinement of formulas~\pref{unlabeled} and~\pref{labelled}, stated in terms of the Frobenius characteristic  of the $\S_n$-modules $\Labelledpolyominoes_\pi$ and $\Labelledpolyominoes_{k,n}$ (here considered as free vector spaces over the specified sets).


Writing $\nu=(\nu_1,\nu_2,\ldots,\nu_j)$ for $\gamma(\pi)$, it is classical that the Frobenius  characteristic of $\Labelledpolyominoes_\pi$ is simply given by the symmetric function $h_{\nu}(\mbf{z})$ (in the variables $\mbf{z}=z_1,z_2,z_3,\ldots$). Indeed, the Young subgroup $\S_{\nu_1}\times \cdots\times \S_{\nu_j}$ is  precisely the stabilizer of the labelled parallelograms polyominoes of shape $\pi$. We thus have that 
\begin{proposition}\label{prop_frob_pol}
   The Frobenius characteristic of $\Labelledpolyominoes_{k,n}$ is given by the formula
      \begin{equation}
         \bleux{{\rm Frob}(\Labelledpolyominoes_{k,n}):=\sum_{\pi\in \PolyominoParrallelogram_{k,n}} h_{\gamma(\pi)}(\mbf{z})},
      \end{equation}
and
     \begin{eqnarray}
         \bleu{ {\rm Frob}(\Labelledpolyominoes_{k,n})} 
                        &=&\bleu{ \frac{1}{k}\,h_n[k\,\mbf{z}]\,\binom{n+k-2}{k-1}}\label{eq:Frob}\\
                        &=& \sum_{\lambda\vdash n}\bleux{{k^{\ell(\lambda)-1}} \binom{n+k-2}{k-1}}\, \frac{ \,p_\lambda(\mbf{z})}{z_\lambda} \label{eq:Frob_p}\\
                        &=&\sum_{\mu\vdash n} \bleux{\frac{1}{k} s_\mu(\underbrace{1,1,\ldots,1}_k)\binom{n+k-2}{k-1}}\,s_\mu(\mbf{z}),\label{char_parkn}
      \end{eqnarray}   
      where $\ell(\nu)$ {\rm (}resp. $\ell(\mu)${\rm )} stands for the length of a composition $\nu$ {\rm (}resp. partition $\mu${\rm )}, and $k$ is considered as a constant for the plethystic evaluation in the formulas.\end{proposition}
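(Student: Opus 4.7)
The first equality is essentially obtained from the setup preceding the statement. For a fixed shape $\pi$ with $\gamma(\pi) = \nu = (\nu_1, \ldots, \nu_j)$, the symmetric group $\S_n$ acts transitively on $\Labelledpolyominoes_\pi$, and the stabilizer of any chosen labelling is the Young subgroup $\S_{\nu_1}\times\cdots\times\S_{\nu_j}$: the requirement that labels be increasing along each big-step singles out exactly one representative per coset. Consequently $\Labelledpolyominoes_\pi \cong \S_n/\S_\nu$ as $\S_n$-sets, with Frobenius characteristic $h_\nu = h_{\gamma(\pi)}$, and summing over all shapes $\pi$ yields the first displayed equality.

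The substantive step is the plethystic identity
\[
\sum_{\pi \in \PolyominoParrallelogram_{k,n}} h_{\gamma(\pi)}(\mathbf z) \;=\; \frac{1}{k}\binom{n+k-2}{k-1}\, h_n[k\mathbf z].
\]
A natural approach is a cyclic-lemma-style bijective argument. By formula~\pref{chemins} one identifies $h_n[k\mathbf z] = \Frob(\LabelledPaths_{k-1,n})$, and $\binom{n+k-2}{k-1} = |\Paths_{k-1,n-1}|$; moreover the cardinality check $k\cdot|\Labelledpolyominoes_{k,n}| = k^n\binom{n+k-2}{k-1} = |\LabelledPaths_{k-1,n}|\cdot|\Paths_{k-1,n-1}|$ coming from~\pref{labelled} is immediate. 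This suggests constructing a free, $\S_n$-equivariant action of $\Z/k$ on the product $\LabelledPaths_{k-1,n}\times \Paths_{k-1,n-1}$ whose orbit space is in $\S_n$-equivariant bijection with $\Labelledpolyominoes_{k,n}$. The product has Frobenius characteristic $h_n[k\mathbf z]\binom{n+k-2}{k-1}$ (with trivial $\S_n$-action on the path factor), and passing to orbits of a free $\S_n$-equivariant $\Z/k$-action divides the Frobenius by exactly $k$, giving the claimed identity.

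The remaining two equalities are algebraic consequences of the plethystic form. For~\pref{eq:Frob_p}, expand $h_n = \sum_{\lambda\vdash n} p_\lambda/z_\lambda$ and use $p_m[k\mathbf z] = k\,p_m(\mathbf z)$ to obtain $h_n[k\mathbf z] = \sum_\lambda k^{\ell(\lambda)}\, p_\lambda(\mathbf z)/z_\lambda$, then multiply through by $\frac{1}{k}\binom{n+k-2}{k-1}$. For~\pref{char_parkn}, apply the dual Cauchy identity, viewing $k\mathbf z$ as the alphabet with each variable of $\mathbf z$ repeated $k$ times, to get $h_n[k\mathbf z] = \sum_{\mu\vdash n} s_\mu(\underbrace{1,\ldots,1}_{k})\, s_\mu(\mathbf z)$.

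The main obstacle is constructing the $\Z/k$-action explicitly. A natural candidate would act by cyclically permuting the $k$ east-step positions in a common encoding that interpolates between a labelled polyomino and a pair in $\LabelledPaths_{k-1,n}\times\Paths_{k-1,n-1}$, with one ``distinguished'' east step per orbit playing the role of the separator between the top-path and bottom-path contributions. Verifying freeness, $\S_n$-equivariance, and the precise bijection of orbits with labelled polyominoes is the technical heart of the proof, and is where the specific geometry of parallelogram polyominoes intervenes in an essential way. A fallback, should the explicit bijection prove unwieldy, is to verify the polynomial identity obtained by specialising both sides to $\mathbf z = (\underbrace{1,\ldots,1}_{m})$ for all $m\geq 0$, which reduces the claim to the identity $\sum_{\pi}\prod_i\binom{\gamma(\pi)_i+m-1}{\gamma(\pi)_i} = \frac{1}{k}\binom{n+k-2}{k-1}\binom{km+n-1}{n}$ in $m$; both sides are polynomials of degree $n$, so checking $n+1$ values suffices and is amenable to Lindstr\"om--Gessel--Viennot evaluation of the bottom-path count.
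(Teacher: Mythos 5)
Your overall strategy is the right one, and it matches the paper's: the first equality is the orbit--stabilizer argument you give, the last two equalities are the standard $p$- and Cauchy expansions of $h_n[k\,\mbf{z}]$, and the substantive identity \pref{eq:Frob} is indeed proved in the paper by a cyclic-lemma argument on exactly the product $\LabelledPaths_{k-1,n}\times\Paths_{k-1,n-1}$ you identify, with the same cardinality bookkeeping. However, the step you defer as ``the technical heart of the proof'' is precisely the content of the paper's proof, so as written your proposal has a genuine gap rather than a proof. The missing construction is this: one appends an east step to the labelled path $\alpha'$ and shifts $\beta'$ right by one unit before appending a north step, then glues infinitely many copies of each to form bi-infinite periodic paths $\ell^\infty$ and $\beta^\infty$; two pairs are equivalent when they yield the same bi-infinite pair up to translation. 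The cyclic lemma of \cite{ADDL-cyclic} guarantees that $\ell^\infty$ and $\beta^\infty$ have exactly $k$ ``transversal'' intersections per period (points reached by an east step of $\ell^\infty$ and a north step of $\beta^\infty$), so each class has size $k$; exactly one representative per class has $\alpha$ weakly above $\beta$, i.e.\ is a labelled polyomino; and the set partition of the labels (hence the $\S_n$-isotype) is constant on each class because the big-steps are merely cyclically permuted. These three verifications are what make the division by $k$ legitimate at the level of Frobenius characteristics, not just cardinalities.

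Your proposed fallback does not repair this gap. Specializing both sides at $\mbf{z}=(1,\ldots,1)$ ($m$ ones) for all $m$ determines only the images of the $p_\lambda$ under $p_\lambda\mapsto m^{\ell(\lambda)}$, and this map is not injective on symmetric functions of degree $n\geq 4$ (for instance $p_{31}$ and $p_{22}$ both specialize to $m^2$). So verifying the numerical identity
\begin{equation*}
\sum_{\pi}\prod_i\binom{\gamma(\pi)_i+m-1}{\gamma(\pi)_i}=\frac{1}{k}\binom{n+k-2}{k-1}\binom{km+n-1}{n}
\end{equation*}
for all $m$ would confirm an enumeration of $m$-colored labelled polyominoes but could not establish the claimed identity of Frobenius characteristics, which is a statement about the full $\S_n$-module structure.
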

Observe that  Formulas~\pref{labelled} and~\pref{unlabeled} follow easily from the above expressions, and that~\pref{eq:Frob}--\pref{char_parkn} are polynomial in $k$. Special values are as follows:
\begin{eqnarray*}
   {\rm Frob}(\Labelledpolyominoes_{k,1})&=&h_{{1}}(\mbf{z})\\
   {\rm Frob}(\Labelledpolyominoes_{k,2})&=&{k\choose 2} h_{11}(\mbf{z})+k\,h_{2}(\mbf{z})\\
   {\rm Frob}(\Labelledpolyominoes_{k,3})&=&2\,\binom{k+1}{4} h_{111}(\mbf{z})+3\,\binom{k+1}{3}h_{21}(\mbf{z})+\binom{k+1}{2}h_{3}(\mbf{z})\\
   {\rm Frob}(\Labelledpolyominoes_{k,4})&=&5\,\binom{k+2}{ 6}h_{1111}(\mbf{z})+10\,\binom{k+2}{ 5}h_{211}(\mbf{z})+
4\,\binom{k+2}{ 4}h_{31}(\mbf{z})\\
 &&\qquad\qquad +2\,\binom{k+2}{ 4} h_{22}(\mbf{z})+\binom{k+2}{3}h_{4}(\mbf{z})
\end{eqnarray*}


\begin{proof}[Proof of \pref{eq:Frob}]
An extension of the ``cyclic lemma'' of  \cite{ADDL-cyclic}  gives a combinatorial proof of Identity~\pref{eq:Frob}.
Consider the action of $\S_n$, on pairs $(\ell',\beta')$ in $\LabelledPaths_{k-1,n}\times \Paths_{k-1,n-1}$, that permutes labels in $\ell'$ (as in Section~\ref{paths}).
Assume that $\alpha'$ is the underlying path of $\ell'$.
Since $\ell'$ and $\beta$ are independent, in view of~\pref{chemins},  the corresponding Frobenius may be expressed as
\begin{equation}
\Frob(\LabelledPaths_{k-1,n}\times \Paths_{k-1,n-1})(\mbf{z}) = h_n[k\,\mbf{z}]\,\binom{n+k-2}{k-1}\label{eq:Frob-u}.
\end{equation}
We are going to show that there is an equivalence relation on the set $\LabelledPaths_{k-1,n}\times \Paths_{k-1,n-1}$, whose equivalence classes are all of size $k$, each of which containing
one and only one labelled polyomino. Furthermore, for all pairs of a given class, the set partition associated to the labelled path are all the same. Hence,
we may conclude that $\Frob(\Labelledpolyominoes_{k,n})$ is equal to the right hand-side of~\pref{eq:Frob-u} divided by $k$, as formulated in \pref{eq:Frob}. 

\begin{figure}[ht]
\begin{center}
\begin{picture}(10,3)(0,0)

 \linethickness{1mm}
\put(0,0){
\vrectangle{4}{3}}
\put(-1,-1){\grille{5}{4}}
 \linethickness{.5mm}
\put(0,0.05){
\put(0,0){\rouge{\est}}
\put(1,0){\rouge{\nord}\put(.2,.25){\rouge{\footnotesize$2$}}}
\put(1,1){\rouge{\est}}
\put(2,1){\rouge{\est}}
\put(3,1){\rouge{\nord}\put(.2,.25){\rouge{\footnotesize$1$}}}
\put(3,2){\rouge{\nord}\put(.2,.25){\rouge{\footnotesize$3$}}}
\put(3,3){\rouge{\est}}}
\put(0.05,0){
\put(0,0){\bleu{\est}}
\put(1,0){\bleu{\est}}
\put(2,0){\bleu{\nord}}
\put(2,1){\bleu{\nord}}
\put(2,2){\bleu{\est}}
\put(3,2){\bleu{\est}}
\put(4,2){\bleu{\nord}}}

\put(5.1,1.2){\large{$\equiv$}}
\put(7,0){
\put(0,0){\jrectangle{4}{3}}
\put(-1,-1){\grille{5}{4}}
 \linethickness{.5mm}
\put(1,0){
\put(0,0){\bleu{\nord}}
\put(0,1){\bleu{\est}}
\put(1,1){\bleu{\est}}
\put(2,1){\bleu{\nord}}
\put(2,2){\bleu{\est}}
\put(3,2){\bleu{\nord}}}
\put(0,0){\rouge{\nord} \put(.2,.25){\rouge{\footnotesize$1$}}}
\put(0,1){\rouge{\nord} \put(.2,.25){\rouge{\footnotesize$3$}}}
\put(0,2){\rouge{\est}}
\put(1,2){\rouge{\est}}
\put(2,2){\rouge{\nord} \put(.2,.25){\rouge{\footnotesize$2$}}}
\put(2,3){\rouge{\est}}
\put(3,3){\rouge{\est}}}
\end{picture}\end{center}
\caption{Only polyomino in the equivalence class of a pair $(\ell,\beta)$.}
\label{fig:block}
\end{figure}
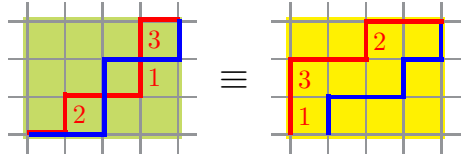

To this end, we work with slightly modified versions, denoted by $\alpha$ and $\beta$, of the paths $\alpha'$ and $\beta'$.
On one hand, $\alpha$ is obtained by simply adding an east step at the end of $\alpha'$. On the other hand, we get $\beta$ by first shifting $\beta'$
to the right (sending each vertex $(a,b)$ to $(a+1,b)$), and then adding a final north step. Hence $\alpha$ now goes from $(0,0)$ to $(k,n)$, while
$\beta$ goes from $(1,0)$ to $(k,n)$. We further denote by $\ell$ the labelled path version of $\ell'$, which is directly obtained by carrying over the labels of $\alpha'$ to $\alpha$. This makes sense since the only difference
between the two is the perforce unlabeled final step. We underline that by construction, $\ell$ and $\beta$ end up at the same point, with a final east step for $\ell$ and a final north step for $\beta$. 

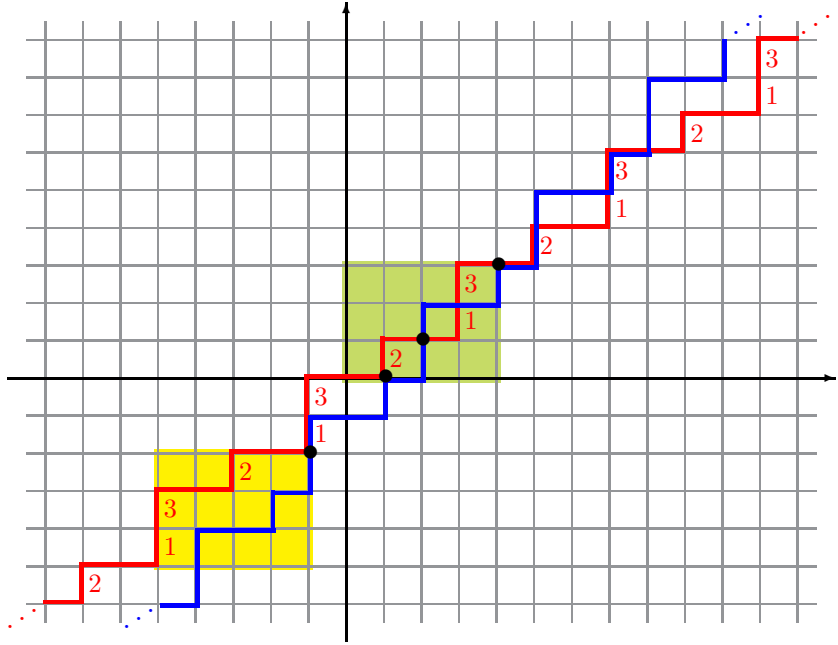
\begin{figure}[ht]
\begin{center}
\begin{picture}(22,16)(0,1)
   \put(9,7){\vrectangle{4}{3}}
   \put(4,2){\jrectangle{4}{3}}
   \grille{21}{16}
   \coord{9}{7}{22}{17}
 \linethickness{.5mm}
\multiput(0.95,1.05)(4,3){5}{
\put(0,0){\rouge{\est}}
\put(1,0){\rouge{\nord}\put(.2,.25){\rouge{\footnotesize$2$}}}
\put(1,1){\rouge{\est}}
\put(2,1){\rouge{\est}}
\put(3,1){\rouge{\nord}\put(.2,.25){\rouge{\footnotesize$1$}}}
\put(3,2){\rouge{\nord}\put(.2,.25){\rouge{\footnotesize$3$}}}
\put(3,3){\rouge{\est}}}
\multiput(4.05,0.95)(3,3){5}
{\put(0,0){\bleu{\est}}
\put(1,0){\bleu{\nord}}
\put(1,1){\bleu{\nord}}
\put(1,2){\bleu{\est}}
\put(2,2){\bleu{\est}}
\put(3,2){\bleu{\nord}}}
\put(0.05,0.05){
\put(8,5){{\circle*{.35}}} 
\put(10,7){{\circle*{.35}}} 
\put(11,8){{\circle*{.35}}} 
\put(13,10){{\circle*{.35}}}}
\put(-0.1,0.3){\rouge{$\revddots$}}
\put(3,0.3){\bleu{$\revddots$}}
\put(21,16.1){\rouge{$\revddots$}}
\put(19.2,16.1){\bleu{$\revddots$}}
\end{picture}\end{center}
\caption{The $k$ transversal intersections of $\infty$-iterates of binomial paths.}
\label{fig:br}
\end{figure}

We now consider the two bi-infinite (labelled in one case) paths, denoted by $\ell^\infty$ and $\beta^\infty$, respective obtained by repeatedly gluing together copies of the paths $\ell$ and $\beta$
at both of their ends. This is illustrated in Figure~\ref{fig:br}. Two pairs $(\ell_1',\beta_1')$ and $(\ell_2',\beta_2')$, in $\LabelledPaths_{k-1,n}\times \Paths_{k-1,n-1}$, are considered to be equivalent if they give rise to the same pair of bi-infinite paths, {\sl i.e.} $(\ell_1^\infty,\beta_1^\infty)$ is equal to $(\ell_2^\infty,\beta_2^\infty)$ up to a shift of the origin. Equivalent pairs correspond to specific portions of the pair of infinite paths.
These portions are precisely characterized by points $(x,y)$ where the two paths intersect in such a way that the $\ell^\infty$ reaches $(x,y)$ by an east step, and $\beta^\infty$ by a north step. We then say that $\ell^\infty$ and $\beta^\infty$ have a \defn{transversal intersection}  at $(x,y)$. Equivalent pairs correspond to portions of $\ell^\infty$ and $\beta^\infty$ both lying in the rectangle having south west corner $(x-k,y-n)$ and north east corner $(x,y)$, for transversal intersection points $p=(x,y)$. We denote by $\ell_p$ and $\beta_p$ these portions.

As shown in~\cite{ADDL-cyclic} the two paths $\ell^\infty$ and $\beta^\infty$ have exactly $k$ transversal intersections.
To finish our proof, we need only observe that, for all transversal intersection point $p$, the set partition of $\ell_p$ is equal to that of $\ell$. Indeed, the big steps of $\ell_p$ are simply a cyclic shift of those of $\ell$, together with their labels.
\end{proof}

\begin{remark}\label{remarque_trois}
It is interesting to compare Formula~\pref{eq:Frob_p}, evaluated at $k=r\,n$, with the conjectured expression \cite[Formula~(17)]{trivariate} 
for the Frobenius characteristic of the space of  trivariate generalized diagonal harmonics. Recall that, up to a sign twist, this expression takes the form
 \begin{equation}\label{formule_frob_p}
      \sum_{\lambda \vdash n} \bleux{(r\, n+1)^{\ell(\lambda)-2}
                      \prod_{j\in\lambda} \binom{(r+1)\,j}{j}}\,\frac{ p_\lambda(\mbf{z})}{z_\lambda}.
\end{equation}
It has been shown in~\cite{preville} that this formula gives the Frobenius characteristic for the action of the symmetric group on labelled intervals in the $r$-Tamari posets. Our point here is that the difference 
 \begin{equation} \sum_{\lambda\vdash n}\bleux{{(rn)^{\ell(\lambda)-1}} \binom{(r+1)\,n-2}{n-1}}\, \frac{ \,p_\lambda(\mbf{z})}{z_\lambda}  -
                            \sum_{\lambda \vdash n} \bleux{(r\, n+1)^{\ell(\lambda)-2}
                      \prod_{j\in\lambda} \binom{(r+1)\,j}{j}}\,\frac{ p_\lambda(\mbf{z})}{z_\lambda}
 \end{equation}
turns out to be $h$-positive. In terms of the corresponding $\S_n$-modules, this implies that the latter is contained in the former as  a``permutation'' $\S_n$-module. In fact, more appears to be true as discussed in Remark~\ref{remarque_quatre}. 
\end{remark}

\section{Doubly labelled polyominoes}
We may further add labels to horizontal steps of the lower path, and get an action of the product group $\S_k\times \S_n$ on the resulting set $\Labelledpolyominoes^{(2)}_{k,n}$ of doubly labelled parallelogram polyominoes.
We observe experimentally  that the associated Frobenius characteristic is given by the formula
\begin{eqnarray}
  \bleu{ {\rm Frob}(\Labelledpolyominoes_{k,n}^{(2)})(\mbf{y},\mbf{z})}&=& \bleu{\frac{1}{n-1} h_k[(n-1)\mbf{y}]\, h_n[k\,\mbf{z}]}\ +\ \bleu{\frac{1}{k-1}h_k[n\,\mbf{y}]\, h_n[(k-1)\mbf{z}]}\nonumber\\
     &&\qquad-\ \bleu{\frac{n+k-1}{(n-1)(k-1)}h_k[(n-1)\mbf{y}]\, h_n[(k-1)\mbf{z}]}.\label{double_frob}
\end{eqnarray}
Observe the evident symmetry 
\begin{equation}
  \bleux{ {\rm Frob}(\Labelledpolyominoes^{(2)}_{k,n})(\mbf{y},\mbf{z})= {\rm Frob}(\Labelledpolyominoes^{(2)}_{n,k})(\mbf{z},\mbf{y})},
\end{equation}
which corresponds to exchanging the coordinates in our polyominoes.
For example, we have
 $$ {\rm Frob}(\Labelledpolyominoes^{(2)}_{3,2})(\mbf{y},\mbf{z})= (6\,s_{{3}} ( \mbf{y} )+3\, s_{{21}} ( \mbf{y} ))\, s_{{2}} ( \mbf{z} )  +(3\,s_{{3}}
 ( \mbf{y} )  +s_{{21}} ( \mbf{y}))\, s_{{11}} ( \mbf{z} ) .$$
Here, the $s_\mu(\mbf{y})$ encode irreducibles of $\S_k$, whereas the $s_\lambda(\mbf{z})$ encode those of $\S_n$.
Naturally, our previous formula~\pref{eq:Frob} appears as the multiplicity of the trivial representation of $\S_k$ in~\pref{double_frob}. More precisely, recalling that
   $$\langle\, h_k[n\,\mbf{y}] ,h_k(\mbf{y})\,\rangle =\binom{n+k-1}{k},$$
we have
  \begin{eqnarray*}
     \bleux{ {\rm Frob}(\Labelledpolyominoes_{k,n})(\mbf{z})}&=&\bleux{ \langle\,  {\rm Frob}(\Labelledpolyominoes^{(2)}_{k,n})(\mbf{y},\mbf{z}),h_k(\mbf{y})\,\rangle}\\
               &=& \bleux{\frac{1}{n-1}\binom{n+k-2}{k}h_n[k\,\mbf{z}]}\\
                &&\qquad +\bleux{\rougex{\left({\textstyle \frac{1}{k-1}\binom{n+k-1}{k}-\frac{n+k-1}{(k-1)(n-1)}\binom{n+k-2}{k}}\right)}h_n[(k-1)\,\mbf{z}]}\\
                 &=& \bleux{\frac{1}{k}\binom{n+k-2}{k-1}h_n[k\,\mbf{z}]},
  \end{eqnarray*}
since $\left({\textstyle \frac{1}{k-1}\binom{n+k-1}{k}-\frac{n+k-1}{(k-1)(n-1)}\binom{n+k-2}{k}}\right)$ vanishes.

It also follows, directly from~\pref{double_frob}, that the total number of doubly labelled parallelogram polyominoes in $\Labelledpolyominoes^{(2)}_{k,n}$ is
\begin{equation}
   \bleu{|\Labelledpolyominoes^{(2)}_{k,n}| ={k}^{n} \left( n-1 \right) ^{k-1}+ \left( k-1 \right) ^{n-1}{n}^{k}-
 \left( k-1 \right) ^{n-1} \left( n-1\right) ^{k-1} \left( n+k-1
 \right) 
}.
\end{equation}
Considering $\S_k$-isotypic components of $\Labelledpolyominoes^{(2)}_{k,n}$, we sometimes obtain nice formulas for the Frobenius characteristic of the resulting $\S_n$-module. For instance, denoting by $\rho$ the rectangular partition $r^n$, we find that the coefficient of $s_\rho(\mbf{y})$ in $\Labelledpolyominoes^{(2)}_{rn,n}(\mbf{y},\mbf{z})$ is
\begin{equation}\label{doubly_diag}
   \bleu{\Labelledpolyominoes^{(2)}_{rn,n}(\mbf{y},\mbf{z})\Big|_{s_\rho(\mbf{y})} =\frac{1}{rn-1} h_n[(rn-1)\,\mbf{z})]}.
\end{equation}
See remark~\ref{doubly_area} for a $q$-analog of this formula. 

We also have an alternative way to doubly label parallelogram polyominoes, 
which has a meaning when such labelled objects encode stable configurations
for the sandpile model associated to the complete bipartite graph $K_{k,n}$ \cite{dukes}.
In this setting, we fix the label of the first step of the blue path to be $1$ 
(in words of the sandpile model, this step corresponds to the sink).
With this convention, we may let $\S_n\times \S_{k-1}$ act on the labels, and denote by 
${\rm Frob}(\Labelledpolyominoes^{(2,\star)}_{k,n})$ the corresponding Frobenius.

\begin{proposition}
We have:
\begin{equation}
\bleu{ {\rm Frob}(\Labelledpolyominoes^{(2,\star)}_{k,n})}
                        =\bleu{ \frac{1}{k}\,h_n[k\,\mbf{z}]\,h_{k-1}[n\,\mbf{y}]}\label{eq:Frob2star}
\end{equation}
which implies that
\begin{equation}
\bleu{ |\Labelledpolyominoes^{(2,\star)}_{k,n}|}
                        =\bleu{ k^{n-1}\,n^{k-1}},
\end{equation}
the number of parking functions (or of recurrent configurations, or of spanning trees) of $K_{k,n}$.
\end{proposition}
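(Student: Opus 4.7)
The plan is to adapt the cyclic-lemma argument from the proof of~\pref{eq:Frob} above, now carrying extra labels on the east steps of the lower path. Let $\LabelledPaths^E_{k-1,n-1}$ denote the set of $(k-1)\times(n-1)$ paths whose $k-1$ east steps bear a bijective labelling by $\{2,\ldots,k\}$, with labels increasing along consecutive east steps at the same height. Exchanging the role of the axes in~\pref{chemins} gives $\Frob(\LabelledPaths^E_{k-1,n-1})(\mbf{y})=h_{k-1}[n\,\mbf{y}]$, while~\pref{chemins} itself gives $\Frob(\LabelledPaths_{k-1,n})(\mbf{z})=h_n[k\,\mbf{z}]$. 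Hence the $\S_n\times\S_{k-1}$-Frobenius characteristic of the product $\LabelledPaths_{k-1,n}\times\LabelledPaths^E_{k-1,n-1}$ is $h_n[k\,\mbf{z}]\,h_{k-1}[n\,\mbf{y}]$.

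Starting from a pair $(\ell,\widetilde{\beta})$ in this product, I would then repeat verbatim the enlargement step of the earlier proof: append one east step to the underlying path of $\ell$ to obtain $\alpha$, and shift the underlying path of $\widetilde{\beta}$ one unit to the right before appending a final north step to obtain $\beta$; the labels are transported, yielding a labelled $\alpha$ whose north steps carry the labels of $\ell$ and a $\beta$ whose east steps carry the labels of $\widetilde{\beta}$. Concatenating copies into bi-infinite paths $\ell^\infty,\beta^\infty$ and quotienting by translation equivalence, the cyclic lemma of~\cite{ADDL-cyclic} produces classes of size $k$, exactly one member of each yielding a legitimate polyomino after prepending an initial east step from $(0,0)$ to $(1,0)$ to $\beta$. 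Labelling this prepended sink step by $1$ produces a well-defined element of $\Labelledpolyominoes^{(2,\star)}_{k,n}$.

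The compatibility check is twofold. First, the $\S_n\times\S_{k-1}$ action on labels factors through the set partition of $\{1,\ldots,n\}$ into fibers of the north-step labelling together with the set partition of $\{2,\ldots,k\}$ into fibers of the east-step labelling, both of which are invariant under permutations within the respective alphabets. Second, at every transversal intersection $(x,y)$, the path $\ell^\infty$ arrives via an east step and $\beta^\infty$ arrives via a north step, so cutting there splits neither a north big-step of $\ell^\infty$ nor an east big-step of $\beta^\infty$; both set partitions are therefore invariant within an equivalence class, and the prepended east step of the polyomino representative sits at the start of the first east big-step of $\beta$, making the pinning of label $1$ to the sink compatible with the increasing condition. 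Dividing the product Frobenius by $k$ then yields~\pref{eq:Frob2star}, and specialising to cardinalities gives $|\Labelledpolyominoes^{(2,\star)}_{k,n}|=\frac{1}{k}\cdot k^n\cdot n^{k-1}=k^{n-1}n^{k-1}$, matching the classical spanning-tree count of $K_{k,n}$.

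The main obstacle I anticipate lies in the bookkeeping around the two added boundary steps together with the shift of labels $\{1,\ldots,k-1\}\to\{2,\ldots,k\}$ that encodes the sink convention: one must check that in every class representative the cut avoids splitting big-steps of either colour, and that in the unique polyomino representative the forced label $1$ lands precisely on the prepended east step, so that the resulting doubly labelled object satisfies all constraints defining $\Labelledpolyominoes^{(2,\star)}_{k,n}$.
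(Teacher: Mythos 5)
Your proposal is correct and follows essentially the same route as the paper: the paper's proof of this proposition is literally the one-line instruction to take the proof of~\pref{eq:Frob} and add labels to all but one of the east steps of the lower path, which is precisely the cyclic-lemma adaptation you carry out (with the extra factor $h_{k-1}[n\,\mbf{y}]$ replacing $\binom{n+k-2}{k-1}$ and the sink step absorbing the label $1$). Your explicit verification that the transversal cuts split neither family of big-steps, and that the forced label $1$ is compatible with the increasing condition since it is minimal, fills in details the paper leaves implicit.
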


\begin{proof}
Take the proof of \pref{eq:Frob} and just add labels on the horizontal steps (but one) of the blue path.
\end{proof}

\section{Operators and Macdonald polynomials}
To go on with our presentation, we need to recall a few notions related to the theory of Macdonald polynomials, and operators for which they are joint eigenfunctions. The \defn{integral form Macdonald polynomials}\footnote{In previous papers, the notation $\widetilde{H}_\mu$ is often use for these, to distinguish them from other polynomials that are not used here.} ${H}_\mu(\mbf{z};q,t)$, with $\mu$ a partition of $n$, expand as 
\begin{displaymath}\label{mac_kostka}
     \bleux{{H}_\mu(\mbf{z};q,t)=\sum_{\lambda\vdash n}
       K_{\lambda,\mu}(q,t)\, s_\lambda(\mbf{z})},
   \end{displaymath}
where the $K(q,t)$'s are certain rational fractions in $q,t$, normalized so that $K_{n,\mu} =1$. 
They are entirely characterized by this normalization together with the two triangularity properties
\begin{enumerate}\itemsep=6pt
   \item[(1)]  ${{H}_\mu[\mbf{z}\,(1-q)]}$ is in the span of the $s_\lambda(\mbf{z})$, for $ \lambda\succeq \mu$ in dominance order,
   \item[(2)]  ${{H}_\mu[\mbf{z}\,(1-t)]}$ is in the span of the $s_\lambda(\mbf{z})$, for $ \lambda\succeq \mu'$ in dominance order.
\end{enumerate} 
It may be worth recalling that we have the special case
   \begin{equation}\label{H_unepart}
         \bleux{{H}_n(\mbf{z};q,t)= \prod_{i=1}^n (1-q^i)\, h_n\!\left[ \frac{ \mbf{z}}{1-q}\right]}.
   \end{equation}
 Observe that the right hand side is independent of $t$. This is helpful when we specialize the $H_\mu$'s at $t=1$, 
since in that case we have the following multiplicative property 
   \begin{equation}
         \bleux{{H}_\mu(\mbf{z};q,1)= \prod_{i=1}^{r} H_{\mu_i}(\mbf{z};q,1)},\qquad \mathrm{for}\qquad \bleux{\mu=\mu_1\mu_2\cdots\mu_r}.
   \end{equation}  
Thus, we can evaluate this last right hand side using~\pref{H_unepart}.  Observe also that this states that ${H}_\mu(\mbf{z};q,1)$ is proportional to $h_\mu[\mbf{z}/(1-q)]$. 

Another often used specialization is at $t=1/q$. In this case we have (see~\cite{remarkable})
 \begin{equation}\label{H_t_invq}
         \bleux{{H}_\mu(\mbf{z};q,1/q)= q^{-n(\mu)}\,\prod_{c\in \mu} (1-q^{{\rm hook}(c)})\, s_\mu\left[\frac{\mbf{z}}{1-q}\right]},
   \end{equation}  
with a statistic ${\rm hook}(c)$ attached to any cell $c$ in a given Ferrers diagram $\mu$.

\subsection*{Operators} For a symmetric function $f$, we consider the linear operator \bleux{$\Delta_f$}, on symmetric functions, such that
\begin{equation}\label{def_Delta}
    \bleux{\Delta_f({H}_\mu):=f[B_\mu(q,t)]\, {H}_\mu},
 \end{equation}
 where $f[B_\mu]$ stands for the evaluation of $f$ in the $q^it^j$, with $(i,j)$ varying in the set of cells of $\mu$. It is customary to denote by $\nabla$ 
the operator $\Delta_{e_n}$ restricted to symmetric functions of degree $n$. 
Observe that the operator $\Delta_{h_{k}}$ (resp. $ \Delta_{e_{k}}$) appears as the coefficient of $u^k$ in the  linear operator $\Phi$ such that
  \begin{equation}
         \bleux{\Phi({H}_\mu)=\Big(\prod_{(i,j)\in \mu} \frac{1}{1-q^it^j\,u}\Big) {H}_\mu},\qquad \mathrm{resp.}\qquad
        \bleux{\Psi({H}_\mu)=\Big(\prod_{(i,j)\in \mu} (1+q^it^j\,u)\Big) {H}_\mu}.
   \end{equation}
One of the interests of the operator $\nabla$ is that it allows a very compact formulation of the Frobenius characteristic of the $\S_n$-module of bivariate diagonal hamonic polynomials, in the form $\nabla(e_n(\mbf{w}))$ (see~\cite{haiman} for more on this).   

We will also denote $\widetilde{\Delta}_{f}$ (resp. $ \overline{\Delta}_{f}$) the
specialization at $t=1$ (resp $t=1/q$) of the operator $\Delta_f$. It turns out that the operators $\widetilde{\nabla}$, $\widetilde{\Phi}$, and $\widetilde{\Psi}$  are multiplicative, since the associated eigenvalues are multiplicative. In view of our previous observations (see~\pref{H_unepart}), the set
\begin{equation}
    \bleux{\{h_\mu[\mbf{z}/(1-q)]\ |\ \mu\vdash n\}}
 \end{equation}
 constitute a basis of eigenfunctions for the operators $\widetilde{\Delta}_{f}$. Since we are interested in calculating the effect of our operators on the elementary symmetric function $e_n$, it is worth recalling that the dual Cauchy formula implies that
 \begin{eqnarray}
    \bleux{ e_n\!\left[\mbf{z}\,\frac{1-x}{1-q}\right]} &=&\bleux{ \sum_{\mu\vdash n} s_{\mu'}[1-x]\, s_\mu\!\left[\mbf{z}\,\frac{1}{1-q}\right]}\\
      &=&\bleux{ \sum_{\mu\vdash n} f_{\mu}[1-x]\, h_\mu\!\left[\mbf{z}\,\frac{1}{1-q}\right]},
 \end{eqnarray}
where we have denoted by $f_\mu(\mbf{z})$ the \defn{forgotten} symmetric functions, that are dual to the elementary. We may specialize these expression, setting $x=q$, to expand the elementary symmetric function $e_n(\mbf{z})$ respectively in terms of the $s_\mu[\mbf{z}/(1-q)]$ and $h_\mu[\mbf{z}/(1-q)]$.

Similarly, from~\pref{H_t_invq}, it follows that the $s_\mu[\mbf{z}/(1-q)]$'s are eigenfunctions of $\overline{\Delta}_{f}$,  since they are proportional to the $H_\mu(\mbf{z};q,1/q)$'s.  The corresponding eigenvalues are $f[B_\mu(q,1/q)]$. Observe that in the case of hook shapes, {\sl i.e.} $\mu=s_{(n-r),1^r}$, these are simply
   \begin{equation}\label{hook_eigenvals}
      \bleux{f[B_{(n-r),1^r}(q,1/q)]= f\left[{q^{-r}}{\qn{n}}\right]}.
   \end{equation}

\subsection*{More operators} Recall from~\cite{HMZ} that we may introduce symmetric functions $E_{n,r}$ by means of the plethystic identity 
\begin{equation}\label{def_Enr}
     \bleux{e_n\!\left[\mbf{z}\,\frac{1-x}{1-q}\right] =\sum_{r=1}^n \frac{(x;q)_r}{(q;q)_r}\,E_{n,r}(\mbf{z})}.
\end{equation}
In particular, setting $z=q$, we find that
\begin{equation}\label{en_to_Enr}
    \bleux{e_n(\mbf{z})=\sum_{r=1}^n E_{n,r}(\mbf{z})}.
\end{equation}
These play a key role in a generalized version of the ``Shuffle Conjecture'', together with another family of linear operators on symmetric functions defined as follows.
For each integer $a$, an any symmetric function $f(\mbf{z})$, we set
 \begin{equation}\label{def_Ca}
    \bleux{C_a f(\mbf{z})= (-q)^{1-a} f\!\left[\mbf{z}-\frac{q-1}{q\,x} \right]\, \Big(\sum_{k=0}^\infty h_k(\mbf{z})\,x^k\Big)\,\Big|_{x^a} }.
\end{equation}
For example, we have
$$\begin{array}{lll}
   C_a(1)=(-q)^{1-a}\,s_a(\mbf{z}), \qquad C_a(s_1(\mbf{z}))=(-q)^{1-r}\,(s_{a,1}(\mbf{z})+q^{-1}\,s_{a+1}(\mbf{z})),\qquad \mathrm{and}\\[10pt]
   C_a(s_2(\mbf{z}))= (-q)^{1-r}\,(s_{a,2}(\mbf{z})+q^{-1}\,s_{a+1,1}(\mbf{z}))+q^{-2}\,s_{a+2}(\mbf{z})),\qquad \mathrm{if}\quad a\geq 2.
\end{array}$$
As shown in~\cite{}, the operators $C_a$'s satisfy the commutativity relations
   \begin{equation}\label{commC}
      \bleux{q\,(C_bC_a+C_{a-1}C_{b+1})= C_aC_b+C_{b+1}C_{a-1}}, \qquad {\rm whenever}\qquad \bleux{a>b}.
   \end{equation}  
Recall also that, for a compositions of $n$ into $r$ parts, we may consider the symmetric functions recursively defined as
   \begin{equation}\label{def_Ca}
    \bleux{E_\gamma(\mbf{z}):= \begin{cases}
      C_a(E_\nu(\mbf{z}))& \text{if}\ \gamma=a\nu,\\[4pt]
      1 & \text{if}\ \gamma=0,
\end{cases}}
\end{equation}
where we denote by $0$ the empty composition;
and, we have
  \begin{equation}\label{Enr_en_Egamma}
  \bleux{E_{n,r}(\mbf{z})=\sum_{\gamma\in {\rm Comp}(n,r)} E_\gamma(\mbf{z})}.
\end{equation}
\begin{remark}\label{shuffle}
An explicit enumerative description of $\nabla(E_\gamma(\mbf{z}))$ is conjectured, in~\cite[Conj. 4.5]{HMZ}, in terms of parking functions whose underlying Dyck has prescribed returns to the diagonal. It appears that a similar statement holds for $\nabla^r(E_\gamma(\mbf{z}))$, with the heights of returns to the diagonal specified by the composition $\gamma$. This is what we call the \defn{$r$-generalized shuffle conjecture}. In the particular case of $\gamma=n$, since $E_n(\mbf{z})=(-q)^{-n+1}h_n(\mbf{z})$, this conjectures implies that $\omega\,\widetilde{\nabla}^r((-q)^{-n+1}h_n(\mbf{z}))$ is the $q$-graded Frobenius characteristic of the $\S_n$-module generated by $r$-parking functions whose underlying $r$-Dyck never returns to the ``diagonal'' (of slope $1/r$), except at its end-points.
\end{remark}

Observe that the commutativity relation~\pref{commC} imply that, for any length $r$ composition $\gamma$ of $n$,  we may express  $E_\gamma(\mbf{z})$ as a linear combination 
of the $E_\mu(\mbf{z})$'s, for $\mu$ varying in the set of partitions of $n$ having $r$ parts. These last symmetric functions are in fact proportional to a specialization of Macdonald polynomials, that is 
\begin{equation}\label{Emu_en_Hmu}
   \bleux{E_\mu(\mbf{z})= q^{-n(\mu)}(-1/q)^{n-r} H_{\mu'}(\mbf{z};q,0)},
\end{equation}
where, as usual,  $n(\mu)$ stands fo $\sum_{i=1}^r (i-1)\,\mu_i$. Notice that the right hand side of this last identity is expressed in terms of the conjugate partition $\mu'$.


\section{Formula for the $q$-Frobenius of the labelled polyomino module}\label{sec:parametres}
Michele D'Adderio has proposed~\cite{dadderio} the following explicit formula for the $q$-weighted area counting version of the Frobenius characteristics  
\begin{equation}\label{michele}
   \bleu{\Frob(\Labelledpolyominoes_{k+1,n})(\mbf{z};q)= \omega\, \widetilde{\Delta}_{h_{k}}(e_n(\mbf{z}))}.
\end{equation}
Here $ \widetilde{\Delta}_{h_{k}}$ stands the specialization at $t=1$ of the operator $ {\Delta}_{h_{k}}$. This implies that 
$ \widetilde{\Delta}_{h_{k}}(e_n(\mbf{z}))$ is $e$-positive, {\sl i.e.} with coefficients in $\N[q]$ when expanded 
in the basis of elementary symmetric functions.
In generating function format, we may reformulate~\pref{michele} as 
    \begin{eqnarray*}
        \bleux{\sum_{k\geq 1} \Frob(\Labelledpolyominoes_{k,n})(\mbf{z};q)\,u^k} &=&\bleux{  \omega(u\,\widetilde{\Phi}(e_n(\mbf{z})))}\\
         &=&\omega \bleux{\sum_{\mu\vdash n}  f_\mu[1-q]  \prod_{i=1}^{\ell(\mu)} \frac{u\,h_{\mu_i}[\mbf{z}/(1-q)]}{(1-u)\cdots (1-u\,q^{\mu_i-1}) } }
    \end{eqnarray*}
Small values are as follows:
\begin{eqnarray*}
   \sum_{k\geq 1} \Frob(\Labelledpolyominoes_{k,1})(\mbf{z};q)\,u^k&=&{\frac {u}{1-u}}\,h_1(\mbf{z}),\\
     \sum_{k\geq 1}  \Frob(\Labelledpolyominoes_{k,2})(\mbf{z};q)\,u^k&=& \frac {u}{(1-u)(1-qu)} \left(h_2(\mbf{z}) +  \frac {u}{1-u}\, h_{11}(\mbf{z})\right),\\
      \sum_{k\geq 1} \Frob(\Labelledpolyominoes_{k,3})(\mbf{z};q)\,u^k&=&
    {\frac {u}{ ( 1-u)( 1-u\,q) ( 1-u\,q^2)}}\\
      &&\qquad\qquad \left( h_{{3}} (\mbf{z})+{\frac { u\,(2+q)}{1-u}}\, h_{{21}} (\mbf{z}) +{\frac { {u^2}\,( 1+q)}{ ( 1-u)^{2} }}\, h_{{111}} (\mbf{z})\right).
\end{eqnarray*}

Even if Formula~\pref{michele} is still conjectural,  the  following two formulas should help understand how $\omega{\Delta}_{h_{k}}(e_n(\mbf{z}))$ could be interpreted as a bivariate Frobenius characteristic for labelled polyominoes, with a second statistics accounting for the parameter $t$, on top of $q$ which already accounts for area. This also confirms indirectly that~\pref{michele} should hold. 
\begin{proposition}
The $q$-free component of $ \omega\, {\Delta}_{h_{k}}(e_n(\mbf{z}))$ is given by the following formula
\begin{equation}\label{prop_equation1}
  \bleu{ \omega\, {\Delta}_{h_{k}}(e_n(\mbf{z}))\big|_{q=0} = \frac{1}{t^k} \left(h_n\!\left[\mbf{z}\frac{1-t^{k+1}}{1-t}\right]-h_n\!\left[\mbf{z}\frac{1-t^{k}}{1-t}\right] \right)}.
\end{equation}
Moreover, we have
\begin{equation}\label{prop_equation2}
   \bleu{\omega\, \Delta_{h_{k-1}}(e_n(\mbf{z}))\big|_{t=1/q}= \frac{q^{n+k-nk-1}}{\qn{k}} h_n\!\left[\mbf{z}\frac{1-q^k}{1-q}\right] \qbinom{n+k-2}{k-1}}
\end{equation}
which is a direct $q$-analog of \pref{eq:Frob}.
\end{proposition}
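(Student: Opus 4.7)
Both identities have the form $\omega\Delta_{h_\star}(e_n)=\mathrm{RHS}$ at a specific Macdonald specialization of $(q,t)$, and I treat them by the same template: expand $e_n(\mbf{z})$ in a basis that diagonalizes the specialized operator, apply the eigenvalue factor, push $\omega$ through, and recognize the result as a telescoping of the claimed closed form.

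For~\pref{prop_equation2}, formula~\pref{H_t_invq} says that the $s_\mu[\mbf{z}/(1-q)]$ form an eigenbasis of $\overline{\Delta}_{f}$ with eigenvalue $f[B_\mu(q,1/q)]$, and for hooks $(n-r,1^r)$ the value $h_{k-1}[q^{-r}\qn{n}]=q^{-r(k-1)}\qbinom{n+k-2}{k-1}$ from~\pref{hook_eigenvals} already accounts for the binomial prefactor. I expand $e_n$ in this basis by setting $x=q$ in the dual Cauchy identity, obtaining $e_n=\sum_\mu s_{\mu'}[1-q]\,s_\mu[\mbf{z}/(1-q)]$, and use that $s_{\mu'}[1-q]$ vanishes unless $\mu$ is a hook (equal to $(1-q)(-q)^{n-r-1}$ when $\mu=(n-r,1^r)$). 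A one-line power-sum check using $\omega p_\lambda=\epsilon_\lambda p_\lambda$ shows that $\omega$ acts on this family by $\lambda\mapsto\lambda'$, so, after reindexing $r\mapsto n-r-1$, the identity reduces to
\[
h_n\!\left[\tfrac{\mbf{z}(1-q^k)}{1-q}\right]=(1-q^k)\sum_{r=0}^{n-1}(-1)^r q^{rk}\,s_{(n-r,1^r)}\!\left[\tfrac{\mbf{z}}{1-q}\right].
\]
I derive this by expanding the LHS via $h_n[X-Y]=\sum_j(-1)^j h_{n-j}[X]\,e_j[Y]$ with $X=\mbf{z}/(1-q)$ and $Y=q^k\mbf{z}/(1-q)$, and then applying the classical hook identity $h_{n-j}e_j=s_{(n-j,1^j)}+s_{(n-j+1,1^{j-1})}$; the two resulting sums telescope, producing exactly the factor $(1-q^k)$.

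For~\pref{prop_equation1} the same scheme works, but the natural specialization is $t=0$ rather than $q=0$; the two are linked by the Macdonald duality $H_\mu(\mbf{z};q,t)=H_{\mu'}(\mbf{z};t,q)$, which, after expanding $e_n$ in the $H_\mu$ basis, forces $\Delta_{h_k}(e_n)(q,t)$ to be symmetric in $q$ and $t$. Thus it suffices to prove the $t=0$ analog $\omega\Delta_{h_k}(e_n)|_{t=0}=q^{-k}(h_n[\mbf{z}\,\qn{k+1}]-h_n[\mbf{z}\,\qn{k}])$ and then swap $q\leftrightarrow t$. At $t=0$ we have $B_\mu(q,0)=\qn{\mu_1}$, so the eigenvalue depends only on $\mu_1$. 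By~\pref{Emu_en_Hmu}, $H_\mu(\mbf{z};q,0)$ is proportional to $E_{\mu'}(\mbf{z})$, and the commutation relations~\pref{commC} preserve the number of parts, so every $E_{n,r}$ lies in the span of the $H_\nu(\mbf{z};q,0)$ with $\nu_1=r$; hence the decomposition $e_n=\sum_r E_{n,r}$ of~\pref{en_to_Enr} is precisely the required $\mu_1$-grading, giving $\Delta_{h_k}(e_n)|_{t=0}=\sum_{r=1}^n h_k[\qn{r}]\,E_{n,r}(\mbf{z})$. Specializing~\pref{def_Enr} at $x=q^{k+1}$ and $x=q^k$, using $(q^j;q)_r/(q;q)_r=\qbinom{j+r-1}{r}$, and applying $\omega$ expresses both $h_n[\mbf{z}\,\qn{k+1}]$ and $h_n[\mbf{z}\,\qn{k}]$ in the basis $\{\omega E_{n,r}\}$; the target follows from the $q$-Pascal relation $\qbinom{k+r}{k}-\qbinom{k+r-1}{k-1}=q^k\qbinom{k+r-1}{k}=q^k h_k[\qn{r}]$.

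The main obstacle, common to both parts, is the identification of the block decomposition of $e_n$ inside the eigenbasis: in~\pref{prop_equation2} one must notice that only hooks contribute to the dual Cauchy expansion, while in~\pref{prop_equation1} one must verify that the combinatorial decomposition $e_n=\sum_r E_{n,r}$ coincides with the $\mu_1$-grading of the $H_\mu(\mbf{z};q,0)$-expansion of $e_n$---this is where the chain~\pref{commC}--\pref{en_to_Enr}--\pref{Emu_en_Hmu} is essential, together with the Macdonald $(q,t)$-duality used to transfer from $t=0$ to $q=0$. Once these are in place, both identities reduce to the two telescopings sketched above: a hook Schur identity for~\pref{prop_equation2} and the $q$-Pascal rule for~\pref{prop_equation1}.
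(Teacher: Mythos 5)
Your proposal is correct and follows essentially the same route as the paper: for \pref{prop_equation1}, the $q\leftrightarrow t$ symmetry of $\Delta_{h_k}(e_n)$, the identification of the $E_{n,r}$ as eigenfunctions of $\Delta_{h_k}\big|_{t=0}$ via the chain \pref{commC}--\pref{Emu_en_Hmu}, and the $q$-Pascal telescoping of the specializations of \pref{def_Enr}; for \pref{prop_equation2}, the hook eigenbasis $s_{(n-r,1^r)}[\mbf{z}/(1-q)]$ with eigenvalues \pref{hook_eigenvals} and the Cauchy/hook expansion of $e_n$. The only cosmetic difference is that you re-derive the hook expansion of $h_n[\mbf{z}\,(1-q^k)/(1-q)]$ by the $h_{n-j}e_j$ telescoping rather than by evaluating $s_{\mu'}[1-q^k]$ directly, which is an equivalent computation.
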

For a combinatorial interpretation of the right hand side of~\pref{prop_equation1}, see Remark~\ref{remark_dinv_ribbon}.

\begin{proof}[\bleu{\bf Proof}.]
To prove~\pref{prop_equation1}, we first observe that 
${\Delta}_{h_{k}}(e_n(\mbf{z}))$ is symmetric in $q$ and $t$, we may thus exchange their role. We may also remove $\omega$, so that we need only show that
\begin{equation}\label{to_show}
\widehat{\Delta}_{h_k}(e_n(\mbf{z}))= \frac{1}{q^k} \left(e_n\!\left[\mbf{z}\frac{1-q^{k+1}}{1-q}\right]-e_n\!\left[\mbf{z}\frac{1-q^{k}}{1-q}\right] \right),
 \end{equation}
 where $\widehat{\Delta}_{h_k}:= \Delta_{h_{k}}\big|_{t=0}$.
Evaluating \pref{def_Enr} at $z=q^{k+1}$, we get
\begin{equation}\label{eval_Enr}
     e_n\!\left[\mbf{z}\,\frac{1-q^{k+1}}{1-q}\right] =\sum_{r=1}^n \qbinom{k+r}{r}\,E_{n,r}(\mbf{z}),
\end{equation}
since, by definition, 
   $$ \qbinom{k+r}{r}= \frac{(q^{k+1};q)_r}{(q;q)_r}.$$
Using~\pref{eval_Enr}, and well known properties of $q$-binomial coefficients, we can calculate that the right hand side of \pref{to_show} is
\begin{eqnarray}
\widehat{\Delta}_{h_k}(e_n(\mbf{z}))&=& \sum_{r=1}^n \frac{1}{q^k}\left( \qbinom{k+r}{r}- \qbinom{k+r-1}{r}\right) E_{n,r}(\mbf{z})\\
                                                                &=& \sum_{r=1}^n \ \qbinom{k+r-1}{r-1}E_{n,r}(\mbf{z}) 
\end{eqnarray}
Now, recalling \pref{en_to_Enr}, this last equation readily follows from the surprising fact that the $E_{n,r}(\mbf{z})$'s are eigenfunctions of the operator $ \widehat{\Delta}_{h_k}$. Indeed, as we will see, we have
    \begin{equation}
          \bleux{\widehat{\Delta}_{h_k}\, E_{n,r}(\mbf{z})= \qbinom{k+r-1}{r-1}E_{n,r}(\mbf{z})}. 
    \end{equation}
This  follows from the even stronger fact that for all composition $\gamma$, having $r$ parts, we have
   \begin{equation}\label{eigenfunctD}
          \bleux{\widehat{\Delta}_{h_k}\, E_{\gamma}(\mbf{z})= \qbinom{k+r-1}{r-1}E_{\gamma}(\mbf{z})}. 
    \end{equation}
In view of the observation following~\pref{Enr_en_Egamma}, we need only show that~\pref{eigenfunctD} holds for $E_\mu(\mbf{z})$, with $\mu$ a length $r$ partition. Since we have already seen recalled (see \pref{Emu_en_Hmu}) that these $E_\mu$'s are essentially specializations of the $H_\mu$'s, it only remains to check that
  $$ {\widehat{\Delta}_{h_k}\, H_{\mu'}(\mbf{z};q,0)= \qbinom{k+r-1}{r-1}H_{\mu'}(\mbf{z};q,0) }. $$
But, by definition of ${\Delta}_{h_{k}}$, this is equivalent to the classical fact that 
   $$ \qbinom{k+r-1}{r-1} =h_k\left[\frac{1-q^r}{1-q}\right]= h_k(1,q,\ldots,q^{r-1}).$$
This ends our proof of \pref{prop_equation1}.

To prove~\pref{prop_equation2}, we start by recalling from above (see~\pref{H_t_invq}) that 
the $s_\mu[\mbf{z}/(1-q)]$'s are eigenfunctions of $\overline{\Delta}_{h_{k-1}}=\Delta_{h_{k-1}}\big|_{t=1/q}$, with eigenvalue $h_{k-1}[B_\mu(q,1/q)]$.  
Once again, we remove $\omega$, to turn the identity we have to show into
   \begin{equation}\label{prop_equation2bis}
   \overline{\Delta}_{h_{k-1}}(e_n(\mbf{z}))= \frac{q^{n+k-nk-1}}{\qn{k}} e_n\!\left[\mbf{z}\frac{1-q^k}{1-q}\right] \qbinom{n+k-2}{k-1}.
\end{equation}
As we have already seen, using Cauchy's identity, we have that
 $$e_n\!\left[\mbf{z}\frac{1-q^k}{1-q}\right] =\sum_{\mu\vdash n} s_{\mu'}[1-q^k]\, s_\mu\left[\frac{\mbf{z}}{1-q}\right].$$
Since $s_\nu[1-u]$ is only non zero when $\nu$ is a hook shape, and  $s_{(n-r),1^r}[1-u]= (-u)^r(1-u)$,
we may rewrite the above identity as as
\begin{equation}\label{expand_e}
   e_n\!\left[\mbf{z}\frac{1-q^k}{1-q}\right] =\sum_{r=0}^{n-1} (1-q^k)\, (-q^k)^{n-r-1}  s_{(n-r),1^r}\left[\frac{\mbf{z}}{1-q}\right].
 \end{equation}
Using this identity at $k=1$, we may expand $e_n(\mbf{z})$ in terms of eigenfunctions for the operator $\overline{\Delta}_{h_{k-1}}$, and thus we calculate (using~\pref{hook_eigenvals})  that
   \begin{eqnarray*}
   \overline{\Delta}_{h_{k-1}}(e_n(\mbf{z}))&=&\bleux{ \overline{\Delta}_{h_{k-1}}}  \sum_{r=0}^{n-1}(1-q)\, (-q)^{n-r-1}  s_{(n-r),1^r}\left[\frac{\mbf{z}}{q-1}\right]  \\
                &=& \sum_{r=0}^{n-1}(1-q)\,  (-q)^{n-r-1}  \bleux{q^{-r(k-1)} h_{k-1}\left[\frac{1-q^n}{1-q}\right]}   s_{(n-r),1^r}\left[\frac{\mbf{z}}{q-1}\right]\\
                 &=&\bleux{\qbinom{n+k-2}{k-1}} \  \sum_{r=0}^{n-1} (-1)^{n-r-1} (1-q)\,  {q^{n-rk-1}}   s_{(n-r),1^r}\left[\frac{\mbf{z}}{q-1}\right]\\
                 &=& \bleux{\frac{q^{n+k-nk-1}}{\qn{k}}} {\qbinom{n+k-2}{k-1}}\  \sum_{r=0}^{n-1} \bleux{(1-q^k)\,  {(-q^k)^{n-r-1}}}   s_{(n-r),1^r}\left[\frac{\mbf{z}}{q-1}\right]
\end{eqnarray*}
which shows \pref{prop_equation2bis}, using \pref{expand_e} to evaluate the sum.
  \end{proof}

  \begin{remark}\label{remark_dinv_ribbon}
The right-hand side of \pref{prop_equation1} may be interpreted as a graded Frobenius 
of ribbon shaped labeled parallelogram polyominoes of size $n\times k$. 
The needed grading corresponds to the $\dinv$ 
statistic  for unlabeled parallelogram polyominoes, as defined in \cite{ADDHL}.
Recall that, in the $\AA$-word encoding $w=w_1w_2\cdots w_k$ of a parallelogram polyomino (see Figure~\ref{fig:Ppiw}), the $\dinv$ statistic is the number of pairs $(w_i,w_j)$,
with $i<j$ and $j$ is the successor of $i$ in $\AA$.
For any ribbon-shaped labeled polyomino, we may simply set its dinv-statistic to be equal to that
of the underlying  parallelogram polyomino.
With this definition, we observe that the dinv-value coincides with 
the area below the lower path of the polyomino.
This readily implies that
\begin{equation}\label{Frob-ribbon}
\Big(\Frob(\Labelledpolyominoes_{k,n})(\mbf{z};q)\Big)\Big|_{q=0}=\,
t^n\, \left(h_n\!\left[\mbf{z}\frac{1-t^{k}}{1-t}\right]-h_n\!\left[\mbf{z}\frac{1-t^{k-1}}{1-t}\right] \right)\,.
\end{equation}

\end{remark}

\begin{remark}\label{doubly_area}
Experiments suggest that we have the following area enumerating analog of Formula~\pref{doubly_diag}
\begin{equation}
   \bleu{\Labelledpolyominoes^{(2)}_{rn,n}(\mbf{y},\mbf{z};q)\Big|_{s_{\rho}(\mbf{y})} =\omega\,\widetilde{\nabla}^r((-q)^{-n+1}\,h_n)},
\end{equation}
where $\rho$ is the partition consisting of $n$ rows of size $r$. In fact, this identity follows from the $r$-generalized shuffle conjecture 
(see Remark~\ref{shuffle}), considering paths that never come back to the diagonal of slope $1/r$.
It would be interesting to describe all isotypic components of $\Labelledpolyominoes^{(2)}_{rn,n}$ in this manner.
\end{remark}

\subsection*{Toward a general dinv statistic}
A dinv-statistic\footnote{Which we don't know how to define yet.} on general labelled polyominoes would make possible an analog of the ``Shuffle Conjecture'' of \cite{HHLRU}, in the form: 
\begin{equation}
    \bleux{\Delta_{h_k}(e_n(\mbf{z}))}=\bleux{\sum_{\pi} q^{\area(\pi)}t^{\dinv(\pi)} \mbf{z}_\pi},
\end{equation}
with $\pi$ varying in the set $\Labelledpolyominoes_{k,n}$. More explicitly, this means that we would consider more general labellings of polyominos; with labels in $\N$ and allowing for repetition of labels, while keeping the strictly increasing condition alogs runs of vertical steps. In the above expression, $\mbf{z}_\pi$ denotes  the product of the variables $z_i$, for $i$ running through all labels (with multiplicities) of $\pi$.
In a sense, this should be a natural extension of the original Shuffle Conjecture, since it appears (observed experimentally) that 
\begin{equation}\label{diff_shur_pos}
    \bleux{\Delta_{h_{rn-1}}(e_n(\mbf{z})) - q^{\binom{n-1}{2}}\nabla^r(e_n(\mbf{z}))}\ \qquad {\rm lies\ in}\qquad \bleux{\N[q,t]\{s_\mu|\mu\vdash n\}.}
\end{equation}
As  we have already seen (see Remark~\ref{la_remarque}), parking functions can be considered as special cases of labelled parallelogram polymominoes, making~\pref{diff_shur_pos} a natural phenomenon.

\begin{remark}\label{remarque_quatre} Explicit computer calculations suggest that even more seems to hold. Indeed, up to the same multiplicative factor of $q^{\binom{n-1}{2}}$, the difference between $ \Delta_{h_{n-1}}(e_n(\mbf{z}))$ and the ``Shuffle-like Conjecture Formula'' for the Frobenius of the space of trivariate diagonal harmonics (see \cite[Formula~(15)]{trivariate}),  appears to always be Schur-positive (with coefficients in $\N[q,t]$). Not only is this stronger than \pref{diff_shur_pos}, but it suggest that a third statistic might play a role here.
\end{remark}

\begin{remark}
Along these lines, it is interesting to observe  that, when $k\leq n$,
\begin{equation}
   \bleux{q^{k\,n-\binom{k+1}{2}}\, \overline{\Delta}_{e_k}(e_n(\mbf{z})) = \frac{1}{\qn{k+1}}\,\qbinom{n}{k}\, e_n\!\left[\mbf{z}\,\frac{q^{k+1}-1}{q-1}\right] }.
\end{equation}
Hence, the Frobenius \pref{frobq} may be expressed as
 \begin{equation}
   \bleux{\Frob(\Labelledpolyominoes_{k,n})(\mbf{z};q)= q^{k\,n-\binom{k+1}{2}}\,\qn{k+1}\qbinom{n}{k}^{-1} \omega\,\overline{\Delta}_{e_k}(e_n(\mbf{z})) }.
\end{equation}
Observe the similarity between this expression and~\pref{michele}.
\end{remark}

\subsection*{The bounce statistic}
Angela Hicks \cite{ADDHL} has recently given a nice bijection from which it follows that we have the following explicit formula due to Garsia and D'Adderio
\begin{equation}\label{angela}
   \bleu{\PolyominoParrallelogram_{k,n}(q,t):= \sum_{\pi\in\PolyominoParrallelogram_{k,n}} q^{\area(\pi)} t^{\bounce(\pi)} = \langle \nabla(e_{k+n-2}),h_{k-1}\,h_{n-1}\rangle}
 \end{equation}
where $b(\pi)$ denotes the \defn{bounce} statistic of a polyomino $\pi$, introduced in~\cite{dukes}. In other words, up to multiplicative factor, the right hand side is the coefficient of the monomial symmetric function of $m_{k-1,n-1}$ in the expansion of $\nabla(e_{k+n-2})$. It immediately follows that $\PolyominoParrallelogram_{k,n}(q,t)$ is symmetric, both in $q,t$ and $n,k$, as already observed in~\cite{dukes}.
Moreover,  specializing $t$ at $1/q$, we get the following direct $q$-analogs of \pref{unlabeled}
    \begin{eqnarray}\label{qangela}
       \bleu{q^{(k-1)\,(n-1)}\, \PolyominoParrallelogram_{k,n}(q,1/q)}&=&\bleu{ \frac{1}{\qn{n+k}} \qbinom{n+k}{n}\qbinom{n+k-2}{n-1}}\\
          &=&\bleu{ q^{-(k-1)}\,s_{(k-1,k-1)}(1,q,q^2,\ldots,q^n)}.
    \end{eqnarray}
To prove this, we proceed as follows. Recall from \cite{remarkable} that
  $$q^{\binom{n}{2}}\nabla(e_n)\Big|_{t\leftarrow 1/q} = \frac{1}{\qn{n+1}} e_n\left[ \mbf{z}\,\qn{n+1}\right],$$
so that \pref{angela} may be reformulated as
\begin{equation} 
    {q^{\binom{n+k-2}{2}}\PolyominoParrallelogram_{k,n}(q,1/q) = \left\langle  \frac{1}{\qn{n+k-1}} e_{n+k-2}\left[ \mbf{z}\,\qn{n+k-1}\right],h_{k-1}\,h_{n-1}\right\rangle }.
\end{equation}
Now,  Cauchy formula gives
  $$e_{n+k-2}[\mbf{z}\qn{n+k-1}]= \sum_{\lambda\vdash n+k-2} s_\lambda'(\qn{n+k-1})\,s_\lambda(\mbf{z}),$$
leading us to
 \begin{equation}
    {q^{\binom{n+k-2}{2}}\PolyominoParrallelogram_{k,n}(q,1/q) =  \frac{1}{\qn{n+k-1}} e_{k-1}(\qn{n+k-1})\,e_{n-1}(\qn{n+k-1}) }.
\end{equation}
Finally, since
  $$e_j(\qn{n})=q^j\,\qbinom{n}{j},$$
 we conclude that
  \begin{equation} 
    {q^{\binom{n+k-2}{2}}\PolyominoParrallelogram_{k,n}(q,1/q) =  \frac{1}{\qn{n+k-1}}q^{k-1}\,\qbinom{n+k-1}{{k-1}}  q^{n-1}\,\qbinom{n+k-1}{{n-1}} }.
\end{equation}
A direct calculation shows that this is equivalent to~\pref{qangela}.

\section{Thanks and future considerations}
We would like to thank Vic Reiner for reminding us that we should consider doubly labelled polyominoes in the specific manner we describe here. We would also like to thank Nolan Wallach for putting us on the right track for the interpretation of parrallelogram polyominoes as indexing sets for $SL_2$-invariants.

In our view, the main question that begs to be addressed in the future, seems to find a natural context in which $\Delta_{h_k}(e_n)$ would appear as the bigraded  Frobenius characteristic of an $\S_n$-module similar to the $\S_n$-module of diagonal harmonic polynomials. This would help truly explain all our observations above.


\end{document}